\documentclass[11pt,reqno]{amsart}
\usepackage[utf8]{inputenc}

\usepackage{amssymb, amsmath, amsthm}

\usepackage[colorlinks]{hyperref}
\usepackage[nameinlink]{cleveref}
\hypersetup{
  linkcolor=[rgb]{0.3,0.3,0.6},
  citecolor=[rgb]{0.2, 0.6, 0.2},
  urlcolor=[rgb]{0.6, 0.2, 0.2}
}

\usepackage[alphabetic,lite]{amsrefs}
\usepackage{verbatim}
\usepackage{amscd}   
\usepackage[all]{xy} 
\usepackage{youngtab} 
\usepackage{young} 
\usepackage{ytableau}
\usepackage{tikz}
\usepackage{ mathrsfs }
\usepackage{cases}
\usepackage{array}
\usepackage{cellspace}
\usepackage{calligra,mathrsfs}
\usepackage{bm}
\usepackage{graphicx}
\usepackage{rank-2-roots}
\usepackage{float}

\newcommand{\defi}[1]{{\bf\upshape\sffamily #1}}
\DeclareMathOperator{\ShHom}{\mathscr{H}\text{\kern -3pt {\calligra\large om}}\,}

\newcommand{\BP}{{\mathbf B}}
\newcommand{\bw}{\bigwedge}

\def\kk{{\mathbf k}}
\renewcommand{\ll}{\lambda}

\newcommand{\oo}{\otimes}

\newcommand{\NN}{\mathbb{N}}

\newcommand{\Ext}{\operatorname{Ext}}

\newcommand{\GL}{\operatorname{GL}}
\newcommand{\Hom}{\operatorname{Hom}}

\newcommand{\rk}{\operatorname{rank}}

\newcommand{\Sym}{\operatorname{Sym}}

\newcommand{\rank}{\operatorname{rank}}

\newcommand{\coker}{\operatorname{coker}}
\renewcommand{\det}{\operatorname{det}}

\newcommand{\bb}[1]{\mathbb{#1}}

\renewcommand{\rm}[1]{\textrm{#1}}
\newcommand{\mc}[1]{\mathcal{#1}}
\newcommand{\mf}[1]{\mathfrak{#1}}
\newcommand{\ol}[1]{\overline{#1}}
\newcommand{\op}[1]{\operatorname{#1}}

\newcommand{\ul}[1]{\underline{#1}}

\def\PP{{\mathbf P}}

\def\lra{\longrightarrow}

\newcommand{\stacks}[1]{\cite[Tag \href{https://stacks.math.columbia.edu/tag/#1}{#1}]{stacksProject}}

\newtheorem{theorem}{Theorem}[section]
\newtheorem*{theorem*}{Theorem}
\newtheorem*{problem*}{Problem}
\newtheorem{lemma}[theorem]{Lemma}

\newtheorem*{corollary*}{Corollary}

\newtheorem*{main-thm*}{Main Theorem}
\newtheorem*{coh-flag*}{Cohomology Vanishing on Flag Varieties}
\newtheorem*{coh-proj*}{Cohomology Vanishing on Projective Space}

\theoremstyle{definition}

\newtheorem*{definition*}{Definition}

\newtheorem{example}[theorem]{Example}

\theoremstyle{remark}
\newtheorem{remark}[theorem]{Remark}
\newtheorem*{remark*}{Remark}

\numberwithin{equation}{section}

\newcommand{\claudiu}[1]{{\color{red} \sf $\clubsuit\clubsuit\clubsuit$ Claudiu: [#1]}}

\tikzset{
  treenode/.style = {align=center, inner sep=0pt, text centered,solid,thin,
    font=\sffamily},
  arn_n/.style = {treenode, circle, white, font=\sffamily\bfseries, draw=black,
    fill=black, text width=.5em},
  arn_nl/.style = {treenode, circle, white, font=\sffamily\bfseries, draw=black,
    fill=black, text width=1.5em},  
  arn_r/.style = {treenode, circle, red, draw=red, 
    text width=.5em, very thick},
  arn_v/.style = {treenode, circle, black, font=\sffamily\bfseries, draw=black, text width=1.2em},
  arn_x/.style = {treenode, rectangle, draw=black,
    minimum width=.5em, minimum height=0.5em},
  dott/.style={edge from parent/.style={dotted, very thick,circle,draw}},
  emph/.style={edge from parent/.style={dashed, very thick,circle,draw}},
  norm/.style={edge from parent/.style={solid,thin,circle,draw}}
}
\makeatletter
\def\labelbox#1{%
  \hbox{%
    \setbox\z@=\hbox{$\m@th\labelstyle{\,#1\,}$}%
    \setbox\tw@=\hbox{$\m@th\labelstyle\,$}%
    \dimen@=\ht\z@ \advance\dimen@ by \wd\tw@ \ht\z@=\dimen@
    \dimen@=\dp\z@ \advance\dimen@ by \wd\tw@ \dp\z@=\dimen@
    \box\z@
  }%
}
\makeatother
\begin{document}

\title{Diagonal F-thresholds for determinants and Pfaffians}

\author{Barbara Betti}
\address{Otto-von-Guericke-University Magdeburg, Universitätsplatz 2, Magdeburg, Germany}
\email{barbara.betti@ovgu.de}

\author{Claudiu Raicu}
\address{Department of Mathematics, University of Notre Dame, 255 Hurley, Notre Dame, IN 46556\newline
\indent Institute of Mathematics ``Simion Stoilow'' of the Romanian Academy}
\email{craicu@nd.edu}

\author{Francesco Romeo}
\address{University of Cassino and Southern Lazio, 
DIEI, Department of Electrical and Information \newline \indent Engineering}
\email{francesco.romeo@unicas.it}

\author{Jyoti Singh}
\address{Department of Mathematics, Visvesvaraya National Institute of Technology, Nagpur, India}
\email{jyotisingh@mth.vnit.ac.in}

\subjclass[2020]{Primary 13A35, 14M12, 14M15, 20G05, 20G10}

\date{\today}

\keywords{F-threshold, determinantal rings, Pfaffians, cohomology, polynomial functors and representations}

\begin{abstract} 
We compute the diagonal $F$-thresholds of determinantal hypersurfaces arising from a generic matrix and from a generic symmetric matrix, as well as of the Pfaffian hypersurface arising from a generic skew-symmetric matrix of even size. The main ingredient is a cohomology vanishing theorem for certain line bundles on flag varieties in characteristic $p$. In the cases of the generic matrix and the generic skew-symmetric matrix, we show that the diagonal $F$-threshold attains its minimal possible value, namely the negative of the $a$-invariant. The symmetric case is more subtle and relies in addition on a polynomiality result for representations afforded by cohomology, building on work of the second author with VandeBogert.
\end{abstract}

\maketitle

\section{Introduction}
\label{sec:intro}

In characteristic zero, the representation theory of $\GL_n$ admits two remarkable incarnations: geometrically, through the Borel--Weil--Bott theorem \cites{borel-weil,bott}, which realizes irreducible representations in the cohomology of line bundles on flag varieties, and algebraically, through standard monomial theory and the commutative algebra of determinantal varieties, as developed classically by De Concini--Eisenbud--Procesi \cite{DCEP}. In characteristic $p>0$, both the structure of cohomology and that of determinantal rings are considerably less understood, reflecting the subtle ways in which the Frobenius endomorphism interacts with taking determinants. The goal of this paper is to begin making this interaction precise, by characterizing the diagonal $F$-thresholds of hypersurface rings defined by determinants and Pfaffians. For a Noetherian ring $R$ of positive characteristic, \defi{$F$-thresholds} are important invariants that measure the asymptotic containment relations between regular and Frobenius powers for ideals in $R$. These invariants grew out of work of Huneke, Musta\c t\u a, Takagi, Watanabe \cites{MTW,HMTW} in the regular setting, and were later proved to exist in full generality in \cite{DSNP}. However, they remain difficult to compute in many important contexts, and we illustrate how this can be done for determinantal rings by establishing new vanishing results for cohomology on flag varieties, and by employing fundamental results in modular representation theory and the theory of polynomial functors.

We work over an algebraically closed field $\kk$ of characteristic $p>0$ and we denote by $R$ a standard graded $\kk$-algebra with maximal homogeneous ideal $\mf{m}$. We consider the \defi{Frobenius powers} $\mf{m}^{[q]} = \langle f^{q} : f\in \mf{m}\rangle$, where $q=p^s$. The Frobenius powers provide a way of understanding the action of the Frobenius endomorphism on $R$, their relationship with regular powers of $\mf{m}$ is controlled by
\[ v_R(q) = \max\{d \in \NN \ | \ \mf{m}^d \not\subseteq \mf{m}^{[q]}\}\]
We can view $v_R(q)$ as the \defi{socle degree} of the finite length algebra $R/\mf{m}^{[q]}$, that is the largest degree in which a graded component of $R/\mf{m}^{[q]}$ is non-zero. To simplify the notation, we write $c(R)$ instead of the more common $c^{\mf{m}}(\mf{m})$ for the \defi{(diagonal) $F$-threshold of $R$}, which is defined as the limit
\begin{equation}\label{eq:def-cR}
 c(R) = \lim_{q\to\infty}\frac{v_R(q)}{q}.
\end{equation}

\begin{main-thm*}
 Let $S=\kk[x_{ij}]$ a polynomial ring in the entries of an $n\times n$ matrix. The diagonal $F$-threshold for the hypersurface ring $R=S/\langle f\rangle$ is computed as follows.
 \begin{enumerate}
  \item If $(x_{ij})$ is a generic matrix and $f=\det(x_{ij})$ then $c(R)=n^2-n$.
  \item If $(x_{ij})$ is a generic symmetric matrix and $f=\det(x_{ij})$ then $c(R)=(n^2-1)/2$.
  \item If $(x_{ij})$ is a generic skew-symmetric matrix of even size and $f=\op{Pf}(x_{ij})$ then $c(R)=(n^2-2n)/2$.
 \end{enumerate} 
\end{main-thm*}

Case (1) of the Main Theorem confirms \cite{BMRS}*{Conjecture~29}, which has been verified for $n\leq 4$ in \cite{BMRS}*{Corollary~28}. As remarked in \cite{BMRS}*{Theorem~6}, the formula for $F$-thresholds can be extended to non-square matrices, and the argument we present in Section~\ref{sec:Fthresh-detl} applies in this greater generality. In cases (1) and (3) of the Main Theorem, the formula for $c(R)$ agrees with $-a(R)$, the negative of the $a$-invariant of~$R$ (see Section~\ref{subsec:hyper} for the definition), which is shown in \cite{DS-NB}*{Theorem~4.9} to always give a lower bound for the diagonal $F$-threshold; the difficulty then lies in bounding $c(R)$ from above. By contrast, for the symmetric determinant we have $-a(R)=(n^2-n)/2$, so the lower bound is no longer optimal. Our approach bounds $c(R)$ from both above and below, as explained in Section~\ref{sec:Fthresh-symm-det}. For the Pfaffian hypersurface, our estimate for the upper bound of $c(R)$ reduces via a degeneration argument to the corresponding statement for the determinant of a generic matrix of half the size, and it is explained in Section~\ref{sec:Fthresh-Pfaff}.

Our upper bounds on $c(R)$ for the generic determinant and the generic symmetric determinant are obtained by giving a cohomological interpretation of the graded pieces of $R$, and by characterizing $v_q(R)$ via cohomology vanishing. For each degree $d$ we construct a (non-exact) Koszul complex $\mc{K}^{\bullet}$, whose terms have no higher cohomology, and whose zeroth hypercohomology $\bb{H}^0(\mc{K}^{\bullet})$ measures $\left(R/\mf{m}^{[q]}\right)_d$. The homology sheaves of $\mc{K}^{\bullet}$ can be described explicitly starting from tautological sheaves, using standard multilinear-algebra constructions and Frobenius twists. The remaining challenge is to determine the range in which the cohomology of each homology sheaf vanishes, and the key tool for addressing this is introduced next.

We consider the \defi{(complete) flag variety} $Fl_n$ parametrizing complete flags of subspaces
\[
 V_{\bullet}:\qquad 0\subset V_1\subset\cdots\subset V_{n-1} \subset \kk^n,\qquad\text{ where }\dim(V_i)=i.
\]
The line bundles on $Fl_n$ are parametrized by tuples (weights) $\ul{y}=(y_1,\cdots,y_n)\in\bb{Z}^n$, and we write $\mc{O}_{Fl_n}(\ul{y})$ for the line bundle corresponding to $\ul{y}$ (see also Section~\ref{subsec:flag-theory}). The use of cohomological techniques on Grassmannians and flag varieties to study determinantal varieties goes back to the work of Lascoux on syzygies \cite{lascoux}, and is described extensively in \cite{weyman}. Most of the results in these works require characteristic zero, as they rely on the Borel--Weil--Bott theorem. Nevertheless, the general framework can be formulated over an arbitrary field $\kk$; to obtain results in characteristic $p$, one must find suitable modifications of the characteristic zero Borel--Weil--Bott statements (see \cite{BCRV}*{Sections~9.5,~9.6, Chapter~10} for a concrete illustration). The following is the main vanishing statement that will be needed to estimate $F$-thresholds.

\begin{coh-flag*}  Suppose that $\op{char}(\kk)=p>0$ and let $q=p^s$ for some $s\geq 0$. If $\ll=(\ll_1,\dots,\ll_{n-1})$ is a partition of size $|\ll|\leq (n-1-j)q-1$ and $e\geq(1+j)q$ then
 \begin{equation}\label{eq:vanishing-flag-varieties}
 H^k\left(Fl_n,\mc{O}_{Fl_n}(\ll_1,\dots,\ll_{n-1},e)\right) = 0 \quad\text{ for }k\leq j.
 \end{equation}
\end{coh-flag*}

In the case $j=0$, the hypotheses above imply that $e>\ll_{n-1}$, so the weight $(\ll_1,\cdots,\ll_{n-1},e)$ is not dominant. The vanishing of $H^0$ is then a folklore generalization of the Borel--Weil theorem to arbitrary characteristic (see \cite{BCRV}*{Theorem~9.8.5}). The case $j=1$ follows from a classical result of Andersen characterizing the (non-)vanishing of $H^1$ \cite{andersen}*{Theorem~3.6}. The general case is explained in Section~\ref{sec:coh-vanishing}.

\begin{example}\label{ex:coh-vanishing-Flag-optimal} To see the cohomology vanishing statement (\ref{eq:vanishing-flag-varieties}) is optimal, consider the line bundle 
\[\mc{L} = \mc{O}_{Fl_n}(1^{n-1-j},0^j,1+j).\] 
It has a single non-vanishing cohomology group, namely $H^j(Fl_n,\mc{L})=\bw^n\kk^n\simeq\kk$, which shows the optimality of our result when $q=1$. Using the fact that $Fl_n$ is globally $F$-split, we get an inclusion (see also \cite{and-Frob}*{Proposition~3.3})
\[ H^j(Fl_n,\mc{L}) \hookrightarrow H^j(Fl_n,\mc{L}^q)\text{ for all }q=p^s.\]
Since $\mc{L}^q = \mc{O}_{Fl_n}(\ll_1,\cdots,\ll_{n-1},e)$ where $e=(1+j)q$ and $|\ll| = (n-1-j)q$, we conclude that the hypothesis $|\ll|\leq (n-1-j)q-1$ is optimal to obtain vanishing.
\end{example}

Cohomology results for line bundles on full flag varieties can often be related to those for higher-rank vector bundles on partial flag varieties such as Grassmannians or the projective space (see \cite{BCRV}*{Chapter~9} and \cite{rai-vdb1} for some examples). Let $\PP$ denote the projective space parametrizing 1-dimensional subspaces of~$\kk^n$, then $\PP$ comes equipped with a tautological short exact sequence
\begin{equation}\label{eq:taut-ses-PP}
 0 \lra \mc{O}_{\PP}(-1) \lra \mc{O}_{\PP}^{\oplus n} \lra \mc{Q} \lra 0
\end{equation}
where $\mc{Q}$ is the tautological rank $(n-1)$ quotient bundle. If we write $\bb{S}_{\ll}$ for the Schur functor associated to a partition $\ll$, then the cohomology of the line bundle $\mc{O}_{Fl_n}(\ll_1,\cdots,\ll_{n-1},e)$ coincides with that of the vector bundle $\bb{S}_{\ll}\mc{Q} \oo \mc{O}_{\PP}(-e)$ on $\PP$. We can then view the following result as a generalization of our earlier cohomology vanishing on flag varieties, which we also verify in Section~\ref{sec:coh-vanishing}.

\begin{coh-proj*}  Suppose that $\op{char}(\kk)=p>0$ and let $q=p^s$, $s\geq 0$. If $\mc{P}$ is a polynomial functor of degree at most $(n-1-j)q-1$ and if $e\geq(1+j)q$ then
 \begin{equation}\label{eq:vanishing-Polyfun}
  H^k\left(\PP,\mc{P}(\mc{Q})\oo \mc{O}_{\PP}(-e)\right) = 0 \quad\text{ for }k\leq j.
 \end{equation}
\end{coh-proj*} 

The result is vacuous for $j\geq n-1$, since the hypothesis forces $\mc{P}$ to have negative degree; however, it is already interesting for $j=n-2$. To explain this case, we let $\Omega$ denote the cotangent sheaf on $\PP$, and use the fact that $\bw^{n-1}\mc{Q} \simeq \mc{O}_{\PP}(1)$ along with the identifications
\[\Omega = \mc{Q}^{\vee} \oo \mc{O}_{\PP}(-1),\quad\text{and}\quad \mc{Q}  =  \left(\bw^{n-2}\mc{Q}^{\vee}\right) \oo \bw^{n-1}\mc{Q} = \bw^{n-2}\Omega \oo \mc{O}_{\PP}(n-1).\]
If we let $d$ denote the degree of $\mc{P}$ and let $\tilde{\mc{P}}=\mc{P} \circ \bw^{n-2}$, we obtain 
\[ \mc{P}(\mc{Q})\oo \mc{O}_{\PP}(-e) = \tilde{\mc{P}}(\Omega)\oo \mc{O}_{\PP}((n-1)d-e),\]
If we take $j=n-2$ then we have $d<q$ and $e\geq(n-1)q$, hence $(n-1)d-e<0$, and the desired vanishing \eqref{eq:vanishing-Polyfun} occurs as a special case of \cite{rai-vdb1}*{Theorem~4.1(3)}. The general case requires more work, and we prove it using an inductive argument, along with the vanishing result on flag varieties.

\medskip

\noindent{\bf Organization.} Section~\ref{sec:prelim} collects the necessary background on $F$-thresholds and $a$-invariants, cohomology, polynomial functors, and modular representation theory of $\GL_n$. In Section~\ref{sec:coh-vanishing} we present the main vanishing results for cohomology, as well as the polynomiality result for cohomology needed to bound $c(R)$ from below in the symmetric case. Sections~\ref{sec:Fthresh-detl},~\ref{sec:Fthresh-symm-det},~\ref{sec:Fthresh-Pfaff} compute $c(R)$ in cases (1), (2), (3) of the Main Theorem, respectively.

\section{Preliminaries}
\label{sec:prelim}

We work throughout over an algebraically closed field $\kk$ of characteristic $p>0$. We will reserve the letter $q$ to denote a power $q=p^s$ of the characteristic of $\kk$. We often refer to elements $\ul{y}\in\bb{Z}^n$ as \defi{weights}, and say that $\ul{y}$ is \defi{dominant} if it belongs to the subset $\bb{Z}^n_{dom}$ where $y_1\geq\cdots\geq y_n$. If moreover $y_n\geq 0$ then we will call $\ul{y}$ a \defi{partition}. The \defi{size} of a weight $\ul{y}$ is $|\ul{y}|=y_1+\cdots+y_n$.

\subsection{Hypersurface rings and duality \cite{bru-her}}
\label{subsec:hyper}

Let $S=\kk[x_1,\dots,x_r]$, let $0\neq f\in S$ denote a homogeneous polynomial of degree $k>0$, and set $R=S/\langle f\rangle$. We abuse notation and write $\mf{m}$ for both the maximal homogeneous ideal of $S$ and that of $R$. 
For $q=p^s$ we set $\ol{S}=S/\mf{m}^{[q]}$ and $\ol{R}=R/\mf{m}^{[q]}$. Given a graded module $M$ we set the initial degree and the end degree of $M$ to be:
\[\op{indeg}(M) = \inf\{i : M_i\neq 0\},\qquad \op{endeg}(M) = \sup\{i : M_i\neq 0\}.\]
Writing $\omega_A$ for the canonical module of a Cohen--Macaulay graded algebra $A$, we have 
\[a(A) = \op{indeg}(\omega_A),\]
where $a(A)$ is the $a$-invariant of $A$. Furthermore, by Theorem 3.3.7 in \cite{bru-her} we obtain
\[\omega_S = S(-r)\text{ and }\omega_R = \Ext^1_S(R,\omega_S) = R(k-r),\]
and it follows that $-a(R)=r-k$ gives a lower bound for the diagonal $F$-threshold 
\begin{equation}\label{eq:cR>aR-hypersurface}
    c(R) \geq r-k.
\end{equation}
This bound will turn out to be sharp for the generic determinant and Pfaffian, but not in the case of the symmetric determinant. For the latter we use a different method to bound from below
\[v_R(q) = \op{endeg}(\ol{R}) = -\op{indeg}(\omega_{\ol{R}}).\]
Using that $\ol{S}$ is Artinian Gorenstein with socle in degree $(q-1)r$, and $\ol{R}=\ol{S}/\langle f\rangle$, it follows that
\[\omega_{\ol{S}} = \Hom_{\kk}(\ol{S},\kk) = \ol{S}((q-1)r)\quad\text{and}\quad \omega_{\ol{R}} = \Hom_{\ol{S}}(\ol{R},\omega_{\ol{S}}) = (0:_{\ol{S}}f)((q-1)r).\]
If we let $t=\op{indeg}(0:_{\ol{S}}f)$ then $\op{indeg}(\omega_{\ol{R}})=t-(q-1)r$, and therefore
\begin{equation}\label{eq:vRq-from-indeg0f}
    v_R(q) = (q-1)r - \op{indeg}(0:_{\ol{S}}f).
\end{equation}
Finding optimal lower bounds for $v_R(q)$ is then equivalent to find minimal degree annihilators of $f$ in $\ol{S}$, which we determine for the symmetric determinant in characteristic $2$ in Section~\ref{subsec:char2-symdet}.

\subsection{Representation theory basics \cite{green,jantzen}}
\label{subsec:rep-thy}

Given a group $G$ and a finite dimensional $G$-representation $W$, we say that $W$ is \defi{simple} (or \defi{irreducible}) if it contains no proper subrepresentation $0\subsetneq W'\subsetneq W$. Every finite dimensional representation $W$ admits a (Jordan--H\"older) filtration
\begin{equation}\label{eq:filtr-Wbullet}
0=W_0\subset W_1\subset\cdots\subset W_r = W
\end{equation}
such that the quotients (composition factors) $W_{i+1}/W_i$ are simple. Although the filtration itself is not unique, the simple composition factors, counted with multiplicity and considered up to isomorphism (and reordering), are uniquely determined by $W$. Given a simple $G$-representation $L$, we write $[W:L]$ for the \defi{multiplicity} of $L$ in $W$, that is, the number of quotients $W_{i+1}/W_i$ isomorphic to $L$ in a filtration \eqref{eq:filtr-Wbullet}. We say that $L$ \defi{occurs} in (or is composition factor of) $W$ if $[W:L]>0$.

We next consider the algebraic group $G=\GL_n(\kk)$ and recall some properties of finite dimensional $G$-representations. The simple $G$-representations are indexed by the collection of \defi{dominant weights}
\[\bb{Z}^n_{dom} = \{\ll\in\bb{Z}^n : \ll_1\geq\cdots\geq\ll_n\}.\]
We write $L_{\ll}$ for the simple representation corresponding to $\ll$. Let $\omega_i$ denote the $i$-th \defi{fundamental weight} $(1^i,0^{n-i})=(1,\dots,1,0,\dots,0)$, where $1$ and $0$ appear, respectively, $i$ and $(n-i)$ times, then
\[ L_{\omega_i} = \bw^i\kk^n\quad\text{for }i=1,\dots,n.\]
Every dominant weight $\ll$ is expressible uniquely as
\begin{equation}\label{eq:lam-from-fundamentals}
\ll = a_1\omega_1+\cdots+a_n\omega_n,\quad\text{where }a_1,\dots,a_{n-1}\in\bb{Z}^n_{\geq 0},\ a_n\in\bb{Z}.
\end{equation}
More precisely, we have $a_i=\ll_i-\ll_{i+1}$ for $i=1,\dots,n-1$, and $a_n=\ll_n$. We refer to $L_{\omega_1}=\kk^n$ as the \defi{standard representation}, and to $L_{\omega_n}=\det(\kk^n)$ as the \defi{determinant representation}. We have that $L_{\omega_n}$ is $1$-dimensional, and it has the property that
\[ L_{\ll} \oo L_{\omega_n} = L_{\ll+\omega_n} \quad\text{ for all }\ll\in\bb{Z}^n_{dom}.\]
This implies that
\[ L_{-\omega_n} = \Hom_{\kk}\left(\bw^n\kk^n,\kk\right) = L_{\omega_n}^{\vee},\quad\text{and}\quad L_{-a\omega_n} = \left(L_{\omega_n}^{\vee}\right)^{\oo a}\text{ for }a\geq 0.\]
We say that $L_{\ll}$ is a \defi{polynomial representation} if $\ll_n\geq 0$ (that is, if $\ll$ is a partition). More generally, a $G$-representation $W$ is polynomial if all of its composition factors $L_{\ll}$ are polynomial. 

If $p=\op{char}(\kk)$ then (iterates of) Frobenius give rise to natural operations on the category of representations. If we write $\phi^{(i)}:G\lra G$ for the endomorphism sending $\phi^{(i)}(g) = g^{p^i}$ then for every $G$-representation $W$ we get via restriction along $\phi^{(i)}$ a new (twisted) representation, denoted $W^{(i)}$. In general we have $W^{(0)}=W$, $(W^{(i)})^{(j)} = W^{(i+j)}$, and for simple representations we have $L_{\ll}^{(i)} = L_{p^i\cdot \ll}$. We note that twisting is an exact operation compatible with tensor products, and that it preserves the subcategory of polynomial representations. 

We say that a weight $\ll$ as in \eqref{eq:lam-from-fundamentals} is \defi{$p$-restricted} if $0\leq a_i<p$ for all $i$ (in particular $\ll$ is a partition and $L_{\ll}$ is polynomial). Using the $p$-adic expansions $a_i = \sum_j a^{(j)}_i\cdot p^j$, $0\leq a^{(j)}_i<p$, we can define $\ll^j = \sum_i a^{(j)}_i\cdot\omega_i$ and express every partition $\ll$ uniquely as 
\begin{equation}\label{eq:padic-ll} 
\ll = \ll^0 + p\ll^1 + \cdots + p^s \ll^s,\quad\text{ where }\ll^i\text{ is $p$-restricted for all $i$}.
\end{equation}
By the Steinberg Tensor Product theorem \cite{jantzen}*{Corollary~II.3.17}, the decomposition \eqref{eq:padic-ll} gives rise to an isomorphism
\begin{equation}\label{eq:STP} 
L_{\ll} = L_{\ll^0} \oo L_{\ll^1}^{(1)} \oo \cdots \oo L_{\ll^s}^{(s)}.
\end{equation}

We define the \defi{degree} of $L_{\ll}$ to be $|\ll|=\ll_1+\cdots+\ll_n$, and say that a representation $W$ is homogeneous of degree $d$ if all of its composition factors $L_{\ll}$ have $|\ll|=d$.  Since the degree is additive relative to tensor products of representations, it follows that if $L_{\ll}$ occurs in a tensor product $L_{\mu}\oo L_{\nu}$ then $|\ll|=|\mu|+|\nu|$. Combining this with \eqref{eq:STP}, we get the following consequence that will be needed in Section~\ref{subsec:lowbd-symdet-coh}.

\begin{lemma}\label{lem:pres-in-tensor}
    Suppose that $\mu$ is $p$-restricted, and $\ll=\ll^0+p\ll^1+\cdots+p^s\ll^s$, where each $\ll^i$ is $p$-restricted. If $L_{\ll}$ occurs in a tensor product $L_{p^s\mu}\oo L_{\nu}$ for some partition $\nu$ then $|\ll^s|\geq |\mu|$. 
\end{lemma}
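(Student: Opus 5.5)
The plan is to use the Steinberg Tensor Product theorem \eqref{eq:STP} to split off the part of $\nu$ that lives in $p$-adic digits below $s$. After that, every composition factor of $L_{p^s\mu}\oo L_\nu$ should be an explicit simple module whose top digit can be read off directly. Concretely, I write the $p$-adic expansion $\nu=\nu^0+p\nu^1+\cdots$ as in \eqref{eq:padic-ll}, and set
\[
\nu' = \nu^0+p\nu^1+\cdots+p^{s-1}\nu^{s-1},\qquad \nu'' = \nu^s+p\nu^{s+1}+\cdots,
\]
so that $\nu=\nu'+p^s\nu''$ with $\nu''$ a partition. By \eqref{eq:STP} this gives $L_\nu\simeq L_{\nu'}\oo L_{\nu''}^{(s)}$, and also $L_{p^s\mu}=L_\mu^{(s)}$. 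Since twisting is compatible with tensor products, we get
\[
L_{p^s\mu}\oo L_\nu \simeq L_{\nu'}\oo \left(L_\mu\oo L_{\nu''}\right)^{(s)}.
\]

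Next I take a Jordan--H\"older filtration of $W=L_\mu\oo L_{\nu''}$, with composition factors $L_\kappa$. The representation $W$ is polynomial and homogeneous of degree $|\mu|+|\nu''|$, so each such $\kappa$ is a partition with $|\kappa|=|\mu|+|\nu''|$. Twisting is exact and tensoring with $L_{\nu'}$ over a field is exact, so $L_{\nu'}\oo W^{(s)}$ acquires a filtration whose subquotients are the modules $L_{\nu'}\oo L_\kappa^{(s)}$. Hence every composition factor of $L_{p^s\mu}\oo L_\nu$ is a composition factor of some $L_{\nu'}\oo L_{p^s\kappa}$.

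Now expand $\kappa=\kappa^0+p\kappa^1+\cdots$. The weight $\nu'+p^s\kappa$ has $p$-adic digits $\nu^0,\dots,\nu^{s-1},\kappa^0,\kappa^1,\dots$, in that order. This holds because the fundamental-weight coefficients add with no carries: those of $\nu'$ are $<p^s$ and those of $p^s\kappa$ are multiples of $p^s$. By \eqref{eq:STP} applied twice,
\[
L_{\nu'}\oo L_{p^s\kappa} \simeq L_{\nu^0}\oo\cdots\oo L_{\nu^{s-1}}^{(s-1)}\oo L_{\kappa^0}^{(s)}\oo L_{\kappa^1}^{(s+1)}\oo\cdots \simeq L_{\nu'+p^s\kappa},
\]
so this module is simple. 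Therefore $\ll=\nu'+p^s\kappa$ for some such $\kappa$. By uniqueness of the expansion \eqref{eq:padic-ll}, the part of $\ll$ in $p$-adic positions $\geq s$ is $\kappa$. Since $\ll$ has expansion ending at $\ll^s$ (higher digits zero), this forces $\ll^s=\kappa$, and so $|\ll^s|=|\mu|+|\nu''|\geq|\mu|$.

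The only delicate point is the no-carry argument: the digit-by-digit identification of $\nu'+p^s\kappa$ and the resulting simplicity of $L_{\nu'}\oo L_{p^s\kappa}$. I would verify this directly on the coefficients $a_i$ in the fundamental-weight expansion \eqref{eq:lam-from-fundamentals}. The remaining steps are formal consequences of exactness and of additivity of degree.
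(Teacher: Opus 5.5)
Your proof is correct and follows essentially the same route as the paper. Both arguments split $\nu=\nu'+p^s\nu''$, use the Steinberg tensor product theorem to write $L_{p^s\mu}\oo L_\nu\simeq L_{\nu'}\oo(L_\mu\oo L_{\nu''})^{(s)}$, and conclude by additivity of degree. Your explicit no-carry check that $L_{\nu'}\oo L_{\kappa}^{(s)}\simeq L_{\nu'+p^s\kappa}$ is simple spells out the step the paper compresses into ``it follows that $\ll^i=\nu^i$ for $i<s$ and $L_{\ll^s}$ is a composition factor of $L_\mu\oo L_{\nu^s}$.''
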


\begin{proof}
    Writing $\nu = \nu^{0}+p\nu^1+\cdots+p^s\nu^s$, where $\nu^0,\dots,\nu^{s-1}$ are $p$-restricted and $\nu^s$ is a partition,~then
    \[ L_{p^s\mu}\oo L_{\nu} = L_{\nu^0}\oo L_{\nu^1}^{(1)} \oo \cdots L_{\nu^{s-1}}^{(s-1)} \oo \left(L_{\mu} \oo L_{\nu^{s}} \right)^{(s)}.\]
    It follows that if $L_{\ll}$ is a composition factor of $L_{p^s\mu}\oo L_{\nu}$ then $\ll^i=\nu^i$ for $i<s$ and $L_{\ll^s}$ is a composition factor of $L_{\mu} \oo L_{\nu^{s}}$, which implies $|\ll^s|=|\mu|+|\nu^s|\geq |\mu|$, as desired.
\end{proof}

\subsection{Polynomial functors \cite{green,fri-sus}}
\label{subsec:poly-fun}

We consider the category $\mf{Pol}_d$ of \defi{(strict) polynomial functors} of degree $d$ over the field $\kk$ (see \cite{rai-vdb1}*{Section~2} for a quick summary of the relevant background). The main examples of polynomial functors are associated to a partition $\ll$ of $d$, and are given by the \defi{Schur functors} $\bb{S}_{\ll}$ and the \defi{Weyl functors} $\bb{W}_{\ll}$. When $\ll=(d)$ we have $\bb{S}_{\ll}=\Sym^d$ is the usual $d$-th symmetric power functor and $\bb{W}_{\ll}=\op{D}^d$ is the $d$-th divided power functor. When $\ll=(1^d)$, one has that $\bb{S}_{\ll}=\bb{W}_{\ll}$ coincide with the exterior power functor $\bw^d$. $\mf{Pol}_d$ is a finite length category with simple objects $\bb{L}_{\ll}$ parametrized by partitions of $d$. We have that $\bb{L}_{\ll}$ can be realized as the unique simple subfunctor of $\bb{S}_{\ll}$, and the unique simple quotient of~$\bb{W}_{\ll}$. All other composition factors of $\bb{S}_{\ll}$ and $\bb{W}_{\ll}$ have the form $L_{\mu}$, where $\mu<\ll$ in the dominance order.

The \defi{Frobenius $q$-power functor $F^q$} arises as the special case $F^q = \bb{L}_{\ll}$ for the singleton partition~$\ll=(q)$ and hence it is a subfunctor of $\Sym^q$. We define the \defi{$q$-truncated symmetric powers}~as
\begin{equation}\label{eq:def-TqSymd}
T_q\Sym^d  = \coker\left(F^q \oo \Sym^{d-q} \overset{\phi}{\lra} \Sym^d \right),
\end{equation}
where the natural transformation $\phi$ is induced by the inclusion $F^q\hookrightarrow \Sym^q$ followed by the multiplication map on symmetric powers.

Given a polynomial functor $\mc{P}\in \mf{Pol}_d$, evaluation on $\kk^n$ yields a polynomial representation $\mc{P}(\kk^n)$ of $\GL_n$ of degree $d$. The most familiar examples of degree $d$ representations are the symmetric powers $\Sym^d\kk^n$, exterior powers $\bw^d\kk^n=L_{\omega_d}$, tensor powers $T^d\kk^n=\left(\kk^n\right)^{\oo d}$ and the divided powers $D^d\kk^n$. In general one has $\bb{L}_{\ll}(\kk^n)=L_{\ll}$ if $\ll_{n+1}=0$ (that is, if $\ll$ has at most $n$ parts), and $\bb{L}_{\ll}(\kk^n)=\bb{S}_{\ll}(\kk^n)=\bb{W}_{\ll}(\kk^n)=0$ if $\ll_{n+1}>0$. Note that if $n\geq d$ then every partition $\ll$ of $d$ satisfies $\ll_{n+1}=0$. In particular, the assignment $\mc{P}\mapsto \mc{P}(\kk^n)$ gives an equivalence of categories between $\mf{Pol}_d$ and the category of polynomial representations of $\GL_n$ of degree $d$.

\subsection{Flag varieties and flag bundles \cite{BCRV}}
\label{subsec:flag-theory}

In this section we recall some of the formalism and notation from \cite{BCRV}*{Section~9}, which will be useful for our arguments in Section~\ref{sec:coh-vanishing}. We let $\BP$ be an algebraic variety, and let $\mc{E}$ be a locally free sheaf of finite rank on $\BP$. We write $\bb{F}_{\BP}(\mc{E})$ for the \defi{relative flag bundle}, with structure morphism $\pi:\bb{F}_{\BP}(\mc{E})\lra\BP$, and tautological line bundles
\[ \mc{O}_{\bb{F}_{\BP}(\mc{E})}(\ul{y}),\quad\ul{y}\in\bb{Z}^n.\]
Using these conventions, if the indexing weight $\ul{y}$ is dominant, then we write the direct image as
\[ \pi_*\left(\mc{O}_{\bb{F}_{\BP}(\mc{E})}(\ul{y})\right) = \bb{S}_{\ul{y}}\mc{E},\quad R^j\pi_*\left(\mc{O}_{\bb{F}_{\BP}(\mc{E})}(\ul{y})\right) = 0\text{ for }j\neq 0.\]
We consider the projective bundle $\PP=\bb{P}_{\BP}(\mc{E})=\ul{\op{Proj}}(\Sym_{\mc{O}_{\BP}}\mc{E})$, with tautological exact sequence
\[ 0 \lra \mc{R} \lra \pi^*\mc{E} \lra \mc{Q} \lra 0,\]
where $\mc{Q} = \mc{O}_{\PP}(1)$. Following the discussion in \cite{BCRV}*{Section~9.6}, we can factor $\pi$ as
\begin{equation}\label{eq:factorization-pi-fg}
 \xymatrix{
 {\bb{F}_{\BP}(\mc{E})} \ar[r]_-f \ar@/^1.5pc/[rr]^-\pi & \PP \ar[r]_-g & \BP \\
 }
\end{equation}
and think of $f$ as the structure map of the flag bundle $\bb{F}_{\PP}(\mc{R})$, thus identifying $\bb{F}_{\BP}(\mc{E})=\bb{F}_{\PP}(\mc{R})$. We then get an identification
\[ \mc{O}_{\bb{F}_{\BP}(\mc{E})}(y_1,y_2,\dots,y_n) = f^*(\mc{Q}^{y_1}) \oo \mc{O}_{\bb{F}_{\PP}(\mc{R})}(y_2,\dots,y_n)\]
and the projection formula \stacks{01E8} implies
\[R^qf_*\left(\mc{O}_{\bb{F}_{\BP}(\mc{E})}(y_1,y_2,\dots,y_n)\right) = \mc{Q}^{y_1} \oo R^qf_*\left(\mc{O}_{\bb{F}_{\PP}(\mc{R})}(y_2,\dots,y_n)\right)\text{ for all }q.\]
We can combine this identification with the relative Leray spectral sequence \stacks{01F6}
\[E_2^{p,q}=R^pg_*\left(R^qf_*\left(\mc{O}_{\bb{F}_{\BP}(\mc{E})}(y_1,y_2,\dots,y_n)\right)\right) \Longrightarrow R^{p+q}\pi_*\left(\mc{O}_{\bb{F}_{\BP}(\mc{E})}(y_1,y_2,\dots,y_n)\right)\]
to obtain the implication
\begin{equation}\label{eq:inductive-vanishing}
 \text{if }R^k f_*\left(\mc{O}_{\bb{F}_{\PP}(\mc{R})}(y_2,\dots,y_n)\right) = 0\text{ for }k\leq j\text{ then }R^k \pi_*\left(\mc{O}_{\bb{F}_{\BP}(\mc{E})}(y_1,y_2,\dots,y_n)\right) = 0\text{ for }k\leq j.
\end{equation}


\section{Cohomology vanishing and polynomiality}
\label{sec:coh-vanishing}

The goal of this section is to verify the main vanishing and polynomiality results for cohomology on flag varieties and projective spaces. For the vanishing results, the proof structure makes it more convenient to formulate the arguments in the relative setting using the setup from Section~\ref{subsec:flag-theory} -- we explain this in Section~\ref{subsec:coh-van-flagbundles}. We prove the polynomiality result in Section~\ref{subsec:coh-polynomial-rep}, following closely the strategy from \cite{rai-vdb1}*{Section~3}.

\subsection{Cohomology vanishing on flag bundles}
\label{subsec:coh-van-flagbundles}

We begin by formulating our cohomology vanishing on flag varieties in the more general context of flag bundles.

\begin{theorem}\label{thm:relative-coh-vanishing-Flag}
  Let $\bb{F}=\bb{F}_{\BP}(\mc{E})$ denote a flag bundle with structure morphism $\pi:\bb{F}\lra\BP$ as in Section~\ref{subsec:flag-theory}, and let $\rank(\mc{E})=n$. If $\ll=(\ll_1,\dots,\ll_{n-1})$ is a partition of size $|\ll|\leq (n-1-j)q-1$ and $e\geq(1+j)q$, then
 \[ R^k\pi_*\left(\mc{O}_{\bb{F}}(\ll_1,\dots,\ll_{n-1},e)\right) = 0 \quad\text{ for }k\leq j.\]
\end{theorem}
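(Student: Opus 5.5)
The plan is to argue by induction on $n=\rank(\mc{E})$, and for fixed $n$ by induction on $j$, using the factorization $\pi=g\circ f$ from \eqref{eq:factorization-pi-fg} together with the implication \eqref{eq:inductive-vanishing}. The base cases are the ones mentioned in the introduction. For $j=0$ the weight $(\ll_1,\dots,\ll_{n-1},e)$ is not dominant because $e\geq q>\ll_{n-1}$, so $R^0\pi_*$ vanishes by the relative form of the Borel--Weil vanishing. For $n=2$ the statement is a direct computation of $R^k\pi_*\mc{O}(\ll_1,e)$ on a $\bb{P}^1$-bundle. The statement is local on $\BP$ and compatible with flat base change, so we may shrink $\BP$ and assume $\mc{E}$ is trivial when convenient.

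\textbf{Case $\ll_1\geq q$.} Peel off the first coordinate. Write $\mc{O}_{\bb{F}}(\ll_1,\dots,\ll_{n-1},e)=f^*(\mc{Q}^{\ll_1})\oo\mc{O}_{\bb{F}_{\PP}(\mc{R})}(\ll_2,\dots,\ll_{n-1},e)$ with $\rank\mc{R}=n-1$. The truncated partition $\ll'=(\ll_2,\dots,\ll_{n-1})$ has $n-2$ parts and size $|\ll|-\ll_1\leq (n-1-j)q-1-q=((n-1)-1-j)q-1$. Together with $e\geq(1+j)q$, the inductive hypothesis for rank $n-1$ gives $R^kf_*=0$ for $k\leq j$. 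Then \eqref{eq:inductive-vanishing} yields the claim for $\pi$.

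\textbf{Case $\ll_1\leq q-1$.} This is the case I expect to be the main obstacle. Here all parts of $\ll$ are $<q$, and peeling $\ll_1$ does not lower the size budget enough. The first thing I would try is to move the large entry $e$ leftwards using the $\bb{P}^1$-bundle $\bb{F}\to\bb{F}'$ that forgets the $(n-1)$-dimensional piece of the flag.
\begin{itemize}
\item Since $e-\ll_{n-1}\geq 1$, the direct image $R^0$ along this $\bb{P}^1$-bundle vanishes.
\item The only nonzero direct image is $R^1$, a rank $(e-\ll_{n-1}-1)$ bundle. In characteristic $p$ it is only filtered, not split, with associated graded pieces the line bundles
\[\mc{O}(\ll_1,\dots,\ll_{n-2},\,e-1-t,\,\ll_{n-1}+1+t),\qquad 0\leq t\leq e-\ll_{n-1}-2,\]
each on $\bb{F}$ after pulling back.
\item This trades $R^k$ of the original bundle for $R^{k-1}$ of these pieces, so it suffices to prove vanishing for $k\leq j-1$ for each piece.
\end{itemize}

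For the pieces with $e-1-t$ at least as large as everything to its left, the weight can be handled by peeling from the left as in the first case. The pieces whose last entry $e'=\ll_{n-1}+1+t$ is still $\geq jq$ should fall under the inductive hypothesis with $j-1$. The remaining pieces, where the last entry is small, are where the argument is delicate. For these I would combine the two moves: apply further $\bb{P}^1$-reflections to sort the weight, and use the inequality $|\ll|\leq(n-1-j)q-1$ to show that after sorting either a non-dominance vanishing or the first case applies. If this bookkeeping becomes unmanageable, the fallback is to use Frobenius splitting in the style of Example~\ref{ex:coh-vanishing-Flag-optimal}, comparing with $q=1$ via the injection $H^k(\mc{L})\hookrightarrow H^k(\mc{L}^p)$ and reducing $q$ inductively.
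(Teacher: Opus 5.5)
Your base cases, and your treatment of the case $\ll_1\geq q$, are correct and coincide with the paper: you peel off $\ll_1$, apply the rank $n-1$ statement with the same $j$ to $(\ll_2,\dots,\ll_{n-1})$, and conclude with \eqref{eq:inductive-vanishing}. The gap is the case $\ll_1\leq q-1$. You correctly identify it as the core, but you only sketch it, and the sketch does not close.

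After the first $\bb{P}^1$-reflection, the inductive hypothesis with $j-1$ applies to a piece $\mc{O}(\ll_1,\dots,\ll_{n-2},e-1-t,\ll_{n-1}+1+t)$ only when $(\ll_1,\dots,\ll_{n-2},e-1-t)$ is a partition, i.e.\ $e-1-t\leq\ll_{n-2}$. In that case the size bound and the bound on the last entry do follow automatically. Your criterion ``last entry $\geq jq$'' is therefore the wrong one: $e$ is unbounded, so there are many pieces with $\ll_{n-1}+1+t\geq jq$ but $e-1-t>\ll_{n-2}$. For those pieces, ``peeling from the left as in the first case'' is unavailable, because their first entry is still $\ll_1\leq q-1$ and the size budget does not drop.

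Pushing the large entry further left needs more reflections. Their filtrations produce weights such as $(\ll_1,\dots,\ll_{n-3},a-1-s,\ll_{n-2}+1+s,b)$, with $a=e-1-t$ and $b=\ll_{n-1}+1+t$. These have two moderately large entries and a possibly small last entry, so they are not of the shape covered by any of your inductive hypotheses. You give no argument that this process ends in non-dominant weights.

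The Frobenius-splitting fallback points the wrong way. The injection $H^k(\mc{L})\hookrightarrow H^k(\mc{L}^p)$ propagates non-vanishing upward; this is exactly how Example~\ref{ex:coh-vanishing-Flag-optimal} proves optimality. Moreover, a general weight $(\ll,e)$ is not a $p$-th power. In fact the paper's argument never uses that $q$ is a power of $p$.

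The missing idea is to prove a stronger statement in this case. When $\ll_1\leq q-1$, the size hypothesis can be dropped entirely: $j\leq n-2$ and $e\geq(1+j)q$ already suffice (Theorem~\ref{thm:relative-coh-vanishing-small-lam}). This hypothesis is inherited by $(\ll_2,\dots,\ll_{n-1})$. So peeling off $\ll_1$ and inducting on $n$, rather than reflecting at the right end, settles every $j\leq n-3$, and only the top case $j=n-2$ needs new input.

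In that top case, the sheaves $R^kf_*$ with $k\leq n-3$ vanish by induction. The Leray spectral sequence then gives
\[R^{n-2}\pi_*\hookrightarrow g_*\left(\mc{Q}^{\ll_1}\oo R^{n-2}f_*\mc{O}(\ll_2,\dots,\ll_{n-1},e)\right).\]
By \cite{rai-vdb1}*{Theorem~3.1}, the sheaf $R^{n-2}f_*\mc{O}(\ll_2,\dots,\ll_{n-1},e)$ is $\mc{P}(\mc{R})$ for a polynomial functor $\mc{P}$ of degree $d\geq e\geq(n-1)q$. Since $\rank\mc{R}=n-1$, every composition factor $\bb{L}_\mu$ that survives has $\mu_1\geq d/(n-1)\geq q>\ll_1$. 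Embedding $\bb{L}_\mu\subseteq\bb{S}_\mu$ and using left exactness of $g_*$ reduces the claim to $\pi_*\mc{O}(\ll_1,\mu_1,\dots,\mu_{n-1})=0$, which holds by non-dominance. This polynomiality argument is what replaces your unfinished reflection bookkeeping.
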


We will prove Theorem~\ref{thm:relative-coh-vanishing-Flag} as a consequence of the following related vanishing result.

\begin{theorem}\label{thm:relative-coh-vanishing-small-lam}
  Consider a flag bundle $\pi:\bb{F}\lra\BP$ as in Theorem~\ref{thm:relative-coh-vanishing-Flag}. Let $\ll=(\ll_1,\dots,\ll_{n-1})$ be a partition with $q-1\geq \ll_1$, if $j\leq n-2$ and $e\geq(1+j)q$, then
 \[ R^k\pi_*\left(\mc{O}_{\bb{F}}(\ll_1,\dots,\ll_{n-1},e)\right) = 0 \quad\text{ for }k\leq j.\]
\end{theorem}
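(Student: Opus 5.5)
The plan is to argue by induction on $n$, using the factorization $\pi=g\circ f$ from \eqref{eq:factorization-pi-fg} and the implication \eqref{eq:inductive-vanishing}. This peels off the first coordinate $\ll_1$ of the weight.

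Writing $\bb{F}_{\BP}(\mc{E})=\bb{F}_{\PP}(\mc{R})$ with $\rank\mc{R}=n-1$, the remaining weight $(\ll_2,\dots,\ll_{n-1},e)$ on $\bb{F}_{\PP}(\mc{R})$ again has the required shape:
\begin{itemize}
\item $\ll'=(\ll_2,\dots,\ll_{n-1})$ is a partition with $\ll'_1=\ll_2\le\ll_1\le q-1$;
\item $e\geq(1+j)q$ is unchanged;
\item the rank has dropped to $n-1$.
\end{itemize}
Hence, as long as $j\le (n-1)-2=n-3$, the inductive hypothesis applied to $f$ gives $R^kf_*=0$ for $k\le j$, and \eqref{eq:inductive-vanishing} yields $R^k\pi_*=0$ for $k\le j$. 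Since the statement for smaller $j$ is implied by the statement for larger $j$, the whole theorem reduces to the extremal case $j=n-2$, with $e\geq(n-1)q$ and $\ll_1\le q-1$. For small $n$ ($n=2$, i.e. $j=0$) the extremal case is just non-dominance: $e>\ll_{1}$, so $\pi_*=0$ by the relative Borel--Weil statement.

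For the extremal case $j=n-2$ I would factor $\pi$ the other way, through the projective bundle of lines, so that the last coordinate $e$ becomes a twist. Using the dual flag bundle $\bb{F}_{\BP}(\mc{E}^{\vee})$, whose line bundles $\mc{O}(-e,-\ll_{n-1},\dots,-\ll_1)$ correspond to ours, we can put $-e$ in the first position. The remaining weight $(-\ll_{n-1},\dots,-\ll_1)$ is dominant, so its direct image along the flag-bundle map is a single Schur functor with no higher direct images. The problem then becomes the relative version of Cohomology Vanishing on Projective Space: we must show $R^kg_*\big(\bb{S}_{\ll}\mc{Q}\oo\mc{O}(-e)\big)=0$ for $k\le n-2$, where $\mc{Q}$ has rank $n-1$ and every row of $\ll$ has length $<q$.

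The key point is to exploit $\ll_1\le q-1$ rather than $|\ll|<q$, which is what \cite{rai-vdb1}*{Theorem~4.1(3)} needs. My first attempt would be to dualize row-wise. Since $\ll_1<q$, the complementary partition of $\ll$ inside the $(n-1)\times(q-1)$ box is a partition $\ll^c$, and
\[\bb{S}_{\ll}\mc{Q}\simeq(\bb{S}_{\ll^c}\mc{Q})^{\vee}\oo(\det\mc{Q})^{q-1}\]
holds at least up to a filtration, with Schur and Weyl functors interchanged. Using $\det\mc{Q}=\mc{O}(1)$ (relative to $\det\mc{E}$) and $\mc{Q}^{\vee}=\Omega(1)$ twists, the bundle becomes a polynomial functor in $\Omega$ twisted by $\mc{O}\big((n-1)(q-1)-e+\text{(correction)}\big)$, which is negative because $e\geq(n-1)q$. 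Vanishing of $H^{\le n-2}$ would then follow from a Frobenius-compatible version of \cite{rai-vdb1}*{Theorem~4.1} for functors of $\Omega$ with a negative twist, or alternatively from Serre duality on the fibres, which turns the claim into $H^{\ge 1}$-vanishing for a Weyl-type functor of $\mc{Q}$ with a positive twist.

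I expect this base case to be the main obstacle. The inductive step is formal, but in characteristic $p$ the base case must genuinely use $\ll_1<q$ together with $e\geq(n-1)q$. If the duality trick fails, the fallback is to write $\mc{O}(-e)=F^{s*}\mc{O}(-(n-1))\oo\mc{O}(-(e-(n-1)q))$ and use the projection formula along the $s$-th Frobenius. Since $\ll_1<q$, the relevant part of the weight is $q$-restricted, so a Steinberg-type argument (as in Lemma~\ref{lem:pres-in-tensor}) should control the composition factors appearing in the Frobenius pushforward. This would reduce the question to the known vanishing of $H^{\le n-2}(\PP^{n-1},\mc{O}(-m))$ for $m\ge 1$.
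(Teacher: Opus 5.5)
Your induction for $j\le n-3$ is the same as the paper's: peel off $\ll_1$ via $\pi=g\circ f$ and apply \eqref{eq:inductive-vanishing}. The gap is the extremal case $j=n-2$, where all the content lies; it is not proved, and the route you propose for it fails as stated.

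Rewriting that case as $R^kg_*(\bb{S}_{\ll}\mc{Q}\oo\mc{O}(-e))=0$ on the projective bundle of lines is only a reformulation. In the paper the projective-space vanishing is deduced from the flag-bundle statements, not the other way round.

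Your row-complement identity is in fact an honest isomorphism $\bb{S}_{\ll}\mc{Q}\simeq\bb{S}_{\ll^c}(\mc{Q}^{\vee})\oo(\det\mc{Q})^{q-1}$. On $\PP^{n-1}$, using $\mc{Q}^{\vee}=\Omega(1)$ and $\det\mc{Q}=\mc{O}(1)$, it gives
\[\bb{S}_{\ll}\mc{Q}\oo\mc{O}(-e)\simeq\bb{S}_{\ll^c}(\Omega)\oo\mc{O}(n(q-1)-|\ll|-e).\]
So your ``correction'' is $q-1-|\ll|\ge 0$. For $e=(n-1)q$ the twist is $q-n-|\ll|$; for $\ll=0$ this is $q-n$, which is $\ge 0$ once $q\ge n$.
\begin{itemize}
\item The twist is negative only when $|\ll|>n(q-1)-e$.
\item Even in that range you would still need a negative-twist vanishing theorem for $\mc{P}(\Omega)$ with $\mc{P}$ of large degree, which you do not supply.
\item The two fallbacks are not arguments. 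You give no way to control $H^{\ge 1}$ of Weyl-type functors of $\mc{Q}^{\vee}$ with positive twist in characteristic $p$. Composition factors of $\GL_n$-representations also do not determine the Frobenius pushforward of a sheaf on $\PP$.
\end{itemize}

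The missing idea is to keep the same factorization in the extremal case and use polynomiality of the top direct image.
\begin{enumerate}
\item The case $j=n-3$ of the induction applies, since $e\ge(n-1)q$. It gives $R^kf_*\mc{O}(\ll_2,\dots,\ll_{n-1},e)=0$ for $k\le n-3$.
\item The Leray spectral sequence and the projection formula then give $R^{n-2}\pi_*\mc{O}(\ll_1,\dots,\ll_{n-1},e)\simeq g_*(\mc{Q}^{\ll_1}\oo R^{n-2}f_*\mc{O}(\ll_2,\dots,\ll_{n-1},e))$.
\item By \cite{rai-vdb1}*{Theorem~3.1}, $R^{n-2}f_*\mc{O}(\ll_2,\dots,\ll_{n-1},e)=\mc{P}(\mc{R})$ for a polynomial functor $\mc{P}$ of degree $d=\ll_2+\cdots+\ll_{n-1}+e\ge(n-1)q$.
\item Since $g_*$ is left exact, you may pass to composition factors $\bb{L}_{\mu}\subseteq\bb{S}_{\mu}$. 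As $\rank\mc{R}=n-1$, only $\mu$ with at most $n-1$ parts contribute, which forces $\mu_1\ge q>\ll_1$.
\item Hence $g_*(\mc{Q}^{\ll_1}\oo\bb{S}_{\mu}\mc{R})=\pi_*\mc{O}(\ll_1,\mu_1,\dots,\mu_{n-1})=0$, because this weight is not dominant.
\end{enumerate}
The hypothesis $\ll_1\le q-1$ thus enters only through a Borel--Weil type $H^0$-vanishing, never through a higher-cohomology statement in characteristic $p$.
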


\begin{proof}
We argue by induction on $n$. Using the factorization \eqref{eq:factorization-pi-fg} we identify $\bb{F} = \bb{F}_{\PP}(\mc{R})$ and consider the partition $\mu=(\ll_2,\dots,\ll_{n-1})$. If $j\leq n-3$, since $\ll_2\leq q-1$, by induction on $n$ we  have
 \[R^k f_*\left(\mc{O}_{\bb{F}}(\ll_2,\dots,\ll_{n-1},e)\right) = 0 \text{ for }k\leq j.\]
The desired vanishing then follows from the implication \eqref{eq:inductive-vanishing} with $y_i=\ll_i$ for $i\leq n-1$, and $y_n=e$.
We are left with the case when $j=n-2$, when the argument above can be applied to conclude
 \[R^k f_*\left(\mc{O}_{\bb{F}}(\ll_2,\dots,\ll_{n-1},e)\right) = 0 \text{ for }k\leq n-3.\]
Moreover, it follows from \cite{rai-vdb1}*{Theorem~3.1} that there exists a polynomial functor $\mc{P}$ of degree $d=\ll_2+\cdots+\ll_{n-1}+e$ such that
\[ R^{n-2} f_*\left(\mc{O}_{\bb{F}}(\ll_2,\dots,\ll_{n-1},e)\right) = \mc{P}(\mc{R}).\]
To prove that $R^{n-2}\pi_*\left(\mc{O}_{\bb{F}}(\ll_1,\dots,\ll_{n-1},e)\right) = 0$, by the projection formula it suffices to show
\[ g_*\left(\mc{Q}^{\ll_1}\oo \mc{P}(\mc{R})\right) = 0.\]
We may assume that $\mc{P}=\bb{L}_{\mu}$ is simple, for $\mu=(\mu_1,\dots,\mu_{n-1})$ a partition of $d$. Using the inclusion $\bb{L}_{\mu}\hookrightarrow\bb{S}_{\mu}$ and that $g_*$ is left exact, we also assume $\mc{P}=\bb{S}_{\mu}$. By \cite{BCRV}*{Theorem~9.8.5} we have
\[ \mc{P}(\mc{R}) = \bb{S}_{\mu}(\mc{R}) = f_*\left(\mc{O}_{\bb{F}}(\mu)\right)\]
and the projection formula implies
\begin{equation}\label{eq:gstr=pistr}
 g_*\left(\mc{Q}^{\ll_1}\oo \mc{P}(\mc{R})\right) = \pi_*\mc{O}_{\bb{F}}(\ll_1,\mu_1,\dots,\mu_{n-1}).
\end{equation}
Since $d\geq e \geq (n-1)q$, we have $\mu_1\geq q$. It follows that $\ll_1\leq q-1 < \mu_1$ and thus the weight $(\ll_1,\mu_1,\dots,\mu_{n-1})$ is not dominant. Using again \cite{BCRV}*{Theorem~9.8.5} we get the vanishing of the sheaves in \eqref{eq:gstr=pistr}, which concludes the proof.
\end{proof}

\begin{proof}[Proof of Theorem~\ref{thm:relative-coh-vanishing-Flag}]
 If $j\geq n-1$ then there is nothing to prove, because there is no partition of size $|\ll|\leq (n-1-j)q-1<0$. We may therefore assume that $0\leq j\leq n-2$.
 
 If $\ll_1\leq q-1$ then the desired conclusion follows from Theorem~\ref{thm:relative-coh-vanishing-small-lam}.
 Otherwise, we have $\ll_1\geq q$ and the hypothesis $|\ll|\leq (n-1-j)q-1$ forces $j\leq n-3$. Using the factorization \eqref{eq:factorization-pi-fg} we identify $\bb{F} = \bb{F}_{\PP}(\mc{R})$ and consider the partition $\mu=(\ll_2,\dots,\ll_{n-1})$. Since 
 \[ |\mu| = \ll_2+\cdots+\ll_{n-1} = |\ll| - \ll_1 \leq (n-2-j)q-1,\]
 we can conclude by induction on $n$ that
 \[R^k f_*\left(\mc{O}_{\bb{F}}(\ll_2,\dots,\ll_{n-1},e)\right) = 0 \text{ for }k\leq j.\]
We obtain the desired vanishing by the implication \eqref{eq:inductive-vanishing} with $y_i=\ll_i$ for $i\leq n-1$, and $y_n=e$.
\end{proof}

Our result on cohomology vanishing on flag varieties now follows as the special case of Theorem~\ref{thm:relative-coh-vanishing-Flag} where $\BP=\op{Spec}(\kk)$ and $\mc{E}=\kk^n$. It plays a key role in establishing the corresponding Cohomology Vanishing on Projective Space, which we explain next.

\begin{proof}[Proof of \eqref{eq:vanishing-Polyfun}] Consider first the special case when $\mc{P}=\bb{S}_{\ll}$ for some partition $\ll$. If $\ll_n>0$ then $\bb{S}_{\ll}\mc{Q}=0$, since $\mc{Q}$ has rank $n-1$. Otherwise we may write $\ll=(\ll_1,\dots,\ll_{n-1})$ and we have
\begin{equation}\label{eq:vanishing-coh-Schur}
  H^k\left(\PP,\bb{S}_{\ll}(\mc{Q})\oo \mc{O}_{\PP}(-e)\right) = H^k\left(Fl_n,\mc{O}_{Fl_n}(\ll_1,\dots,\ll_{n-1},e)\right) = 0 \quad\text{ for }k\leq j,
\end{equation}
where the first equality follows from \cite{BCRV}*{Theorem~9.8.5} and the projection formula, while the second equality comes from \eqref{eq:vanishing-flag-varieties}.

We prove \eqref{eq:vanishing-Polyfun} by induction on $k$, with the trivial base case $k=-1$. Without loss of generality it suffices to consider when $\mc{P}=\bb{L}_{\ll}$ is simple. Writing $\mc{P}'=\bb{S}_{\ll}/\bb{L}_{\ll}$ we get a short exact sequence
\begin{equation}\label{eq:ses-Llam-Slam-Ppr}
0\lra \bb{L}_{\ll}(\mc{Q})\oo \mc{O}_{\PP}(-e) \lra \bb{S}_{\ll}(\mc{Q})\oo \mc{O}_{\PP}(-e) \lra \mc{P}'(\mc{Q})\oo \mc{O}_{\PP}(-e) \lra 0.
\end{equation}
We assume that $k\leq j$ and, by induction on $k$, we assume that 
\[H^{k'}\left(\PP,\bb{L}_{\ll}(\mc{Q})\oo \mc{O}_{\PP}(-e)\right)  = H^{k'}\left(\PP,\mc{P}'(\mc{Q})\oo \mc{O}_{\PP}(-e)\right) = 0\text{ for }k'<k.\] 
If we consider the long exact sequence in cohomology associated to \eqref{eq:ses-Llam-Slam-Ppr}, using \eqref{eq:vanishing-coh-Schur} we obtain that $H^{k}\left(\PP,\bb{L}_{\ll}(\mc{Q})\oo \mc{O}_{\PP}(-e)\right)=0$, concluding the inductive step and our proof.
\end{proof}

\subsection{Cohomology as a polynomial representation}
\label{subsec:coh-polynomial-rep}

As illustrated in \cite{rai-vdb1}, the cohomology groups of equivariant vector bundles on projective space (and, more generally, on flag varieties) often carry the structure of polynomial representations. In Section~\ref{subsec:lowbd-symdet-coh}, we will employ the following enhanced polynomiality result.

\begin{theorem}\label{thm:poly-rep-coh}
 Let $\mc{Q}$ denote the tautological quotient bundle on the projective space $\PP=\PP^{n-1}$ as in \eqref{eq:taut-ses-PP}. If $\mc{P}$ is a polynomial functor of degree $d$, and $e\geq 1$, then for each $j\geq 0$ there exists a polynomial representation $W_j$ of degree $d+e-n$ such that
 \[ H^j\left(\PP,\mc{P}(\mc{Q}) \oo \mc{O}_{\PP}(e)\right) = L_{\omega_n}\oo W_j.\]
 In particular, if $d+e<n$ then the cohomology groups above vanish identically.
\end{theorem}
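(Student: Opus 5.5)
The plan is to reduce to Schur functors, where the cohomology is that of a line bundle on $Fl_n$, and then prove two separate facts about that cohomology: that it is polynomial of degree $d+e$, and that it is divisible by the determinant.

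\textbf{Step 1: reduction to Schur functors.} Both properties are preserved under subquotients and extensions of $\GL_n$-representations: being polynomial of total degree $d+e$, and having only composition factors $L_\nu$ with $\nu_n\geq 1$. Since $\mathfrak{Pol}_d$ has finite length, it suffices to treat simple $\mc{P}=\bb{L}_\ll$. I would run the same d\'evissage as in the proof of \eqref{eq:vanishing-Polyfun}, using the sequence \eqref{eq:ses-Llam-Slam-Ppr} twisted by $\mc{O}_{\PP}(e)$. In the long exact sequence, $H^j(\bb{L}_\ll(\mc{Q})(e))$ is an extension of a subrepresentation of $H^j(\bb{S}_\ll(\mc{Q})(e))$ by a quotient of $H^{j-1}(\mc{P}'(\mc{Q})(e))$. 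Here $\mc{P}'$ has composition factors $\bb{L}_\mu$ with $\mu<\ll$, so I would induct on $j$ and on the dominance order to reduce to $\mc{P}=\bb{S}_\ll$. If $\ll_n>0$ then $\bb{S}_\ll\mc{Q}=0$. Otherwise, as in \eqref{eq:vanishing-coh-Schur}, \cite{BCRV}*{Theorem~9.8.5} and the projection formula identify the cohomology with $H^j(Fl_n,\mc{O}_{Fl_n}(\ll_1,\dots,\ll_{n-1},-e))$.

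\textbf{Step 2: polynomiality and degree.} For polynomiality and the total degree $d+e$, I would invoke \cite{rai-vdb1}*{Theorem~3.1} and the argument of \cite{rai-vdb1}*{Section~3}. The degree bookkeeping is a torus-weight computation: the weight sum of $\bb{S}_\ll\mc{Q}\oo\mc{O}_{\PP}(e)$ is constant along $\PP$ and equals $d+e$ in the paper's conventions. This step is comparatively routine.

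\textbf{Step 3: divisibility by $L_{\omega_n}$ (the main obstacle).} I must show that no composition factor $L_\nu$ with $\nu_n=0$ occurs. Since the torus weights of $L_\nu$ all have every coordinate $\geq\nu_n$, it suffices to show the following: for each $i$, the part of $H^j$ on which the $i$-th coordinate torus $\bb{G}_m\subset T$ acts with weight $0$ vanishes. By the $S_n$-symmetry of the weights it is enough to take $i=n$.

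Taking a weight space for this $\bb{G}_m$ is exact, so I would compute it on the \v{C}ech complex for the standard cover $U_i=\{x_i\neq 0\}$. On each chart, $\mc{Q}$ and $\mc{O}(1)$ are trivialized and the sections are explicit Laurent-type modules. The weight-$0$ part for the $n$-th coordinate should be identified with the \v{C}ech complex of a bundle on the hyperplane $\PP^{n-2}=\{x_n=0\}$ or its complement. I expect it to become a Koszul-type complex that is exact because $e\geq 1$ forces a positive power of the $n$-th coordinate.

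If the \v{C}ech computation becomes unwieldy, my fallback is geometric. I would restrict along $\GL_{n-1}\times\bb{G}_m$, use the $\bb{G}_m$-fixed locus (the point $[e_n]$ and the hyperplane $\PP^{n-2}$), and use the sequence $0\to\mc{O}(-1)\to\mc{O}^n\to\mc{Q}\to 0$. This expresses the weight-$0$ part as the cohomology of $\bb{S}_\ll(\kk^{n-1}\text{-quotient})\oo\mc{O}(e)$ on $\PP^{n-2}$ tensored with a weight-zero factor, and I would then argue vanishing by induction on $n$.

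Finally, once $H^j=L_{\omega_n}\oo W_j$ with $W_j$ polynomial of degree $d+e-n$, the vanishing for $d+e<n$ is immediate, since a polynomial representation of negative degree is zero.
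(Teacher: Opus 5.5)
You have a genuine gap, and a sign error underneath it.

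\textbf{The gap.} Your Step~1 is the paper's d\'evissage. Step~3, divisibility by $L_{\omega_n}$, is the crux of the theorem, and it is not proved. The \v{C}ech computation is stated only as an expectation. The fallback's identification of the weight-zero part with cohomology of a bundle on $\PP^{n-2}$ is asserted, not derived.

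\textbf{The sign.} You identify the cohomology with $H^j(Fl_n,\mc{O}_{Fl_n}(\ll_1,\dots,\ll_{n-1},-e))$, which has degree $d-e$, yet in Step~2 you claim degree $d+e$. Read literally with the twist $\mc{O}_{\PP}(+e)$, the statement is false. For $d=0$, $e=1$ one gets $H^0(\PP,\mc{O}_{\PP}(1))=(\kk^n)^{\vee}$. This is not polynomial, and it has weights with $n$-th coordinate $0$, so your Step~3 claim fails for it. The intended twist is $\mc{O}_{\PP}(-e)$, i.e.\ the line bundle $\mc{O}_{Fl_n}(\ll_1,\dots,\ll_{n-1},e)$. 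This is what the paper's proof actually handles, and what is applied in Section~\ref{subsec:lowbd-symdet-coh}.

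\textbf{How the paper closes it.} It uses Andersen's strong linkage principle (Theorem~\ref{thm:coh-lamn-poly}). Write $\ll=w\bullet\chi$. Every composition factor $L_\mu$ of $H^j(Fl_n,\mc{O}_{Fl_n}(\ll))$ satisfies $\mu_n\geq\chi_n$. Also $\chi_n-n=\ll_i-i$ for some $i$. The strict bounds $\ll_i>i-n$ hold because $\ll_n=e\geq 1$, and they force $\mu_n\geq 1$. This gives polynomiality and divisibility at once, with no separate appeal to \cite{rai-vdb1}.

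\textbf{How your own route could be completed.} Your torus-weight idea does work with the correct sign, even without reducing to Schur functors, if you use local cohomology instead of \v{C}ech. Let $\mc{F}=\mc{P}(\mc{Q})\oo\mc{O}_{\PP}(-e)$, $H=\{x_n=0\}$, $U=\PP\setminus H$. Track weights of the $n$-th coordinate torus in the excision sequence $H^j_H(\PP,\mc{F})\to H^j(\PP,\mc{F})\to H^j(U,\mc{F})$.

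On $U$: it is affine, and $\mc{Q}|_U$ is framed by $\bar e_1,\dots,\bar e_{n-1}$, which have weight $0$. So $H^0(U,\mc{F})=\mc{P}(\kk^{n-1})\oo\kk[x_i/x_n]\cdot s^e$, where $s$ generates $\mc{O}_{\PP}(-1)|_U$ (last coordinate normalized to $1$). Both $s$ and the $x_i/x_n$ have weight $1$, so all weights here are $\geq e\geq 1$.

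Along $H$: we have $H^j_H(\PP,\mc{F})=\varinjlim_k H^{j-1}(\PP,\mc{F}(kH)/\mc{F})$. The graded pieces $\mc{F}|_H\oo\mc{O}(kH)|_H$, $k\geq 1$, have weights in $[k,k+d]$. This is because $\mc{Q}|_H=\mc{Q}_H\oplus\kk e_n$, and $\mc{O}(kH)$ is $\mc{O}_{\PP}(k)$ with weights shifted up by $k$ (divide by $x_n^k$).

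Hence every weight of $H^j(\PP,\mc{F})$ has positive $n$-th coordinate. So each composition factor $L_\nu$ has $\nu_n\geq 1$, and the central torus gives $|\nu|=d+e$, which is the theorem. That argument is more elementary than the paper's, but it is not what you wrote; as submitted, the decisive step is missing.
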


As it was the case for cohomology vanishing, in order to prove Theorem~\ref{thm:poly-rep-coh} it will be useful to establish a related statement on the full flag variety. This is obtained as a slight modification of \cite{rai-vdb1}*{Theorem~3.1}, and our proof follows closely the arguments in \cite{rai-vdb1}.

\begin{theorem}\label{thm:coh-lamn-poly}
 Suppose that $\ll\in\bb{Z}^n$ satisfies the conditions
 \begin{equation}\label{eq:lbd-lli}
  \ll_i > i-n \text{ for all }i=1,\dots,n.
 \end{equation}
 Then for each $j\geq 0$, there exists a polynomial representation $W_j$ of degree $|\ll|-j$ such that 
 \[H^j\left(Fl_n,\mc{O}_{Fl_n}(\ll)\right)=L_{\omega_n}\oo W_j.\] 
 In particular, if the conditions in \eqref{eq:lbd-lli} hold and $|\ll|<n$, then $H^j\left(Fl_n,\mc{O}_{Fl_n}(\ll)\right)=0$ for all $j$.
\end{theorem}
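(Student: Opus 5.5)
We plan to reduce to a statement about polynomiality without the determinant twist, and to prove that by induction on $n$ along the factorization \eqref{eq:factorization-pi-fg}, arguing in the relative setting of Section~\ref{subsec:flag-theory}. The claim we actually prove for a rank $n$ bundle $\mc{E}$ on a base $\BP$ is the following: for $\ll$ satisfying \eqref{eq:lbd-lli}, each $R^j\pi_*\mc{O}_{\bb{F}}(\ll)$ has the form $\det(\mc{E})\oo\mc{P}_j(\mc{E})$ for a polynomial functor $\mc{P}_j$. Polynomial functors are what make the induction go through, since polynomial representations would not suffice. Taking $\BP=\op{Spec}\kk$ and $\mc{E}=\kk^n$ then gives $H^j=L_{\omega_n}\oo W_j$ with $W_j=\mc{P}_j(\kk^n)$ polynomial. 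Its degree is forced by the central character: the centre acts on $H^j(\mc{O}(\ll))$ through degree $|\ll|$, so $W_j$ has degree $|\ll|-n$. In particular $W_j=0$ when $|\ll|<n$, which is the ``in particular'' statement.

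\textbf{Inductive step.} We use the factorization $\bb{F}=\bb{F}_{\PP}(\mc{R})\xrightarrow{f}\PP\xrightarrow{g}\BP$, where $\mc{Q}=\mc{O}_{\PP}(1)$ and $\mc{R}$ has rank $n-1$. Set $\mu=(\ll_2,\dots,\ll_n)$ and reindex by $i'=i-1$. The hypothesis then reads $\mu_{i'}>i'-(n-1)$, which is the same condition in rank $n-1$. By induction, $R^kf_*\mc{O}(\mu)=\det(\mc{R})\oo\mc{P}'_k(\mc{R})$. Since $\det\mc{R}=g^*\det\mc{E}\oo\mc{Q}^{-1}$, the projection formula gives
\[R^kf_*\mc{O}(\ll)=g^*\det(\mc{E})\oo \mc{Q}^{\ll_1-1}\oo\mc{P}'_k(\mc{R}),\qquad \ll_1-1\geq 1-n.\]
By the Leray spectral sequence, and since polynomial functors (or representations) are closed under subquotients and extensions, it suffices to prove the following base claim on $\PP$. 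For every polynomial functor $\mc{P}$ and every $a\geq 1-n$, each $R^ig_*\left(\mc{Q}^{a}\oo\mc{P}(\mc{R})\right)$ is a polynomial functor of $\mc{E}$.

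\textbf{Base claim.} By left/right exactness and d\'evissage along a finite injective coresolution in $\mf{Pol}_d$, we reduce to $\mc{P}$ a tensor product of symmetric powers. For these we use the exact Koszul sequences
\[0\to\Sym^m\mc{R}\to\Sym^m\mc{E}\to\Sym^{m-1}\mc{E}\oo\mc{Q}\to0.\]
They are exact because $\mc{Q}$ is a line bundle, as one sees from the rank count via Pascal's identity. Tensoring them together coresolves $\mc{Q}^a\oo\mc{P}(\mc{R})$ by terms of the form $g^*\mc{P}''(\mc{E})\oo\mc{Q}^{a+k}$ with $k\geq0$. By the projection formula, each term has cohomology $\mc{P}''(\mc{E})\oo R^\bullet g_*\mc{O}_{\PP}(a+k)$. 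Since $a+k\geq 1-n>-n$, the bundle $\mc{O}(a+k)$ has no higher direct images. Its $g_*$ is either $\Sym^{a+k}\mc{E}$, which is polynomial, or zero when $-n<a+k<0$. The hypercohomology spectral sequence then shows that $R^ig_*$ is built by subquotients and extensions from polynomial functors of $\mc{E}$, as required. The base case $n=1$ is trivial, since then $\ll_1>0$.

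\textbf{Expected obstacle.} The main difficulty is making this functorial. We must know that the cohomology sheaves are genuinely polynomial functors evaluated on $\mc{E}$, compatibly in families, rather than merely sheaves whose fibres carry polynomial representations. This is exactly the framework of \cite{rai-vdb1}*{Theorem~3.1}, which we would follow. The only change from that argument is the shift by $\omega_n$: the bound $\ll_i>i-n$, rather than $\ll_i\geq i-n$, is what produces the extra factor $\det$ at each step while still keeping the twists $a+k$ out of the range where $H^{n-1}$ of a line bundle on $\PP$ is nonzero.
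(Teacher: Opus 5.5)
Your route is different from the paper's, which does not induct at all. Writing $\ll=w\bullet\chi$ with $\chi+\rho$ dominant gives $\min_i(\ll_i-i)=\chi_n-n$, so the hypothesis $\ll_i>i-n$ says exactly that $\chi_n\geq 1$. Andersen's strong linkage principle then gives $\mu_n\geq\chi_n\geq 1$ for every composition factor $L_\mu$ of $H^j(Fl_n,\mc{O}_{Fl_n}(\ll))$. Your induction along $\bb{F}_{\PP}(\mc{R})\to\PP\to\BP$ is a viable alternative, and your bookkeeping is right: the twist satisfies $\ll_1-1\geq 1-n$, only line bundles $\mc{O}_{\PP}(b)$ with $b>-n$ occur, and $W_j$ has degree $|\ll|-n$. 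That degree is also what the paper's proof obtains; the ``$|\ll|-j$'' in the statement is a misprint.

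The genuine gap is that the engine of your base claim fails in characteristic $p$. The sequence $0\to\Sym^m\mc{R}\to\Sym^m\mc{E}\to\Sym^{m-1}\mc{E}\oo\mc{Q}\to 0$ is not exact once $m\geq p$. Locally, take a basis $r_1,\dots,r_{n-1}$ of $\mc{R}$ and let $e$ lift a generator $\bar e$ of $\mc{Q}$. The natural map (comultiplication followed by $\mc{E}\to\mc{Q}$) sends $r^\alpha e^k\mapsto k\,r^\alpha e^{k-1}\oo\bar e$. So for $|\beta|=m-p$, the element $r^\beta e^p$ lies in the kernel but not in $\Sym^m\mc{R}$, and $r^\beta e^{p-1}\oo\bar e$ is not in the image. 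A rank count via Pascal's identity cannot detect this. The characteristic-free Koszul sequence is $0\to\op{D}^m\mc{R}\to\op{D}^m\mc{E}\to\op{D}^{m-1}\mc{E}\oo\mc{Q}\to 0$. However, divided powers are projective rather than injective in $\mf{Pol}_d$, so this does not plug into your injective coresolution as written.

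The approach survives with two repairs. First, keep $\Sym^\mu$ but replace the Koszul sequence by the natural $\GL_n$-stable filtration of $\Sym^\mu\mc{E}$ by $\mc{Q}$-degree (locally, monomials of $e$-degree at most $k$). It contains $\Sym^\mu\mc{R}$, and the quotient $\Sym^\mu\mc{E}/\Sym^\mu\mc{R}$ is filtered by sheaves $\mc{Q}^k\oo\Sym^{\mu'}\mc{R}$ with $k\geq 1$ and $|\mu'|=|\mu|-k$. Induct on $|\mu|$ for all $a\geq 1-n$ simultaneously. Since $\mc{Q}^a\oo\Sym^\mu\mc{E}=\Sym^\mu\kk^n\oo\mc{O}_{\PP}(a)$ has cohomology only in degree $0$, equal to $\Sym^\mu\kk^n\oo H^0(\PP,\mc{O}_{\PP}(a))$, the long exact sequence gives polynomiality for $\mc{Q}^a\oo\Sym^\mu\mc{R}$. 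Arbitrary $\mc{P}$ then follows by embedding into injectives and inducting on cohomological degree, as in the paper's proof of \eqref{eq:vanishing-Polyfun}.

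Second, the functoriality problem you flag as the main obstacle and defer need not be faced. Run the induction on the absolute statement about $\GL_n$-representations. Everything there is equivariant, and the representations all of whose composition factors $L_\mu$ have $\mu_n\geq 1$ (equivalently, those of the form $L_{\omega_n}\oo W$ with $W$ polynomial) form a Serre subcategory. So subquotients and extensions in the Leray and long exact sequences are harmless. The only relative input is $R^kf_*\mc{O}(\mu)\cong\det\mc{R}\oo\mc{P}'_k(\mc{R})$ equivariantly, and this follows from the $\GL_{n-1}$ case. Indeed, $R^kf_*\mc{O}(\mu)$ is the bundle associated to the frame bundle of $\mc{R}$ and the representation $H^k(Fl_{n-1},\mc{O}(\mu))$. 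Moreover, every polynomial $\GL_{n-1}$-representation of degree $d$ is $\mc{P}(\kk^{n-1})$ for some $\mc{P}\in\mf{Pol}_d$, since evaluation is essentially surjective even when $d>n-1$. So polynomial representations do suffice.

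Repaired this way, your argument avoids strong linkage entirely, using only Borel--Weil on $\PP^{n-1}$, Leray, and injectives in $\mf{Pol}_d$, and it shows exactly where each inequality $\ll_i>i-n$ enters. The paper's proof is much shorter but rests on Andersen's theorem.
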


\begin{proof} To prove the theorem, we verify that if $L_{\mu}$ is a simple composition factor of $H^j\left(Fl_n,\mc{O}_{Fl_n}(\ll)\right)$, then $\mu_n\geq 1$. Suppose by contradiction that $\mu_n\leq 0$ and let $\chi\in\bb{Z}^n_{dom}-\rho$ and $w\in\mf{S}_n$ such that $\ll = w\bullet\chi$, where $\mf{S}_n$ denotes the symmetric group, and $\bullet$ denotes the dot action on weights given by $\sigma\bullet\ll = \sigma(\ll+\rho)-\rho$, where $\rho=(n-1,n-2,\dots,0)$. It follows that the sequences
\[(\ll_1-1,\dots,\ll_n-n)\quad\text{and}\quad(\chi_1-1,\dots,\chi_n-n)\]
agree up to a permutation of the entries. By \cite{And-str-link}*{Theorem~1}, we have that $\chi_n\leq\mu_n$. Since $\mu_n\leq 0$, it follows that $\chi_n-n\leq -n$ and therefore we must have 
\[\ll_i-i = \chi_n-n \leq -n\quad\text{ for some }i=1,\dots,n.\]
This contradicts \eqref{eq:lbd-lli} and concludes our proof of polynomiality. Since $H^j\left(Fl_n,\mc{O}_{Fl_n}(\ll)\right)$ is a representation of degree $|\ll|$ and $L_{\omega_n}$ has degree $n$, it follows that $W_j$ has degree $|\ll|-n$, so it can only be a non-zero polynomial representation if $|\ll|\geq n$.
\end{proof}

We can now verify Theorem~\ref{thm:poly-rep-coh}, using a strategy similar to the one used to establish Cohomology Vanishing on Projective Space in the previous section (see also the proof of \cite{rai-vdb1}*{Theorem~4.1}).

\begin{proof}[Proof of Theorem~\ref{thm:poly-rep-coh}] As in the proof of \eqref{eq:vanishing-Polyfun}, we consider first the special case when $\mc{P}=\bb{S}_{\ll}$ is a Schur functor, and we may assume that $\ll_n=0$ so that $\bb{S}_{\ll}(\mc{Q})\neq 0$. We have as in \eqref{eq:vanishing-coh-Schur} that
\[  H^j\left(\PP,\bb{S}_{\ll}(\mc{Q})\oo \mc{O}_{\PP}(-e)\right) = H^j\left(Fl_n,\mc{O}_{Fl_n}(\ll_1,\dots,\ll_{n-1},e)\right).\]
The desired conclusion for $\mc{P}=\bb{S}_{\ll}$ now follows from Theorem~\ref{thm:coh-lamn-poly}, where we set $\ll_n=e$: indeed, the hypothesis \eqref{eq:lbd-lli} applies since $\ll_i\geq 0 > i-n$ for $i\leq n-1$, and $\ll_n=e\geq 1 > n-n$.

To prove the theorem in general, we may without loss of generality assume that $\mc{P}=\bb{L}_{\ll}$ is a simple polynomial functor, and we argue by induction on partitions $\ll$, relative to the dominance order. We write $\mc{P}'=\bb{S}_{\ll}/\bb{L}_{\ll}$ and consider the associated short exact sequence \eqref{eq:ses-Llam-Slam-Ppr}. Using the previously established case of Schur functors, we write $H^j\left(\PP,\bb{S}_{\ll}(\mc{Q})\oo \mc{O}_{\PP}(-e)\right) = L_{\omega_n}\oo W'_j$ for some polynomial representations $W'_j$. Since $\mc{P}'$ has composition factors $\bb{L}_{\mu}$ with $\mu<\ll$, we obtain by induction that $H^j\left(\PP,\mc{P}'(\mc{Q})\oo \mc{O}_{\PP}(-e)\right) = L_{\omega_n}\oo W''_j$ for some polynomial representations $W''_j$. The conclusion for $\mc{P}=\bb{L}_{\ll}$ follows from the long exact sequence in cohomology associated to \eqref{eq:ses-Llam-Slam-Ppr}.
\end{proof}

\section{Generic determinantal rings}
\label{sec:Fthresh-detl}

The goal of this section is to verify part (1) of our Main Theorem, describing the diagonal $F$-threshold for the generic determinantal hypersurface. We in fact compute the diagonal $F$-threshold more generally, for the determinantal ring defined by the maximal minors of a generic $m\times n$ matrix.

\begin{theorem}\label{thm:Fthresh-detl}
Let $\kk$ denote a field of characteristic $p>0$, and let $S = \kk[x_{ij}]$ denote the polynomial algebra in the entries $(x_{ij})$ of the generic $m\times n$ matrix, where $m\geq n\geq 1$. If we let
\[ I_n = \langle n\times n \text{ minors of }(x_{ij})\rangle,\]
and set $R=S/I_n$ for the corresponding determinantal ring, then the $F$-threshold of $R$ is given by
\[ c(R) = m(n-1).\]
\end{theorem}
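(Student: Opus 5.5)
Both inequalities come from a geometric model of $R$. For the lower bound, $R=S/I_n$ is Cohen--Macaulay (Eagon--Northcott) with $a$-invariant $-m(n-1)$: the canonical module of the maximal minors ring is generated in degree $m(n-1)$ below the dimension shift. Combined with the general bound $c(R)\geq -a(R)$ of \cite{DS-NB}*{Theorem~4.9}, this gives $c(R)\geq m(n-1)$. So the work is the upper bound: I want to show $(R/\mf{m}^{[q]})_d=0$ for $d\geq m(n-1)q+C$, where $C$ is a constant independent of $q$.

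For the upper bound, write $S=\Sym(V\oo W)$ with $\dim V=m$ and $\dim W=n$, so that $x_{ij}$ are the entries of the generic map $W^\vee\to V$. The ring $R$ is the coordinate ring of the matrices of rank at most $n-1$. Resolve this variety by the kernel-line construction on $\PP=\PP(W)$ (lines in $W^\vee$). The variety is the image of the total space of $V\oo\mc{Q}$, where $\mc{Q}$ is the rank $n-1$ tautological quotient as in \eqref{eq:taut-ses-PP}. This yields
\[R_d=H^0\big(\PP,\Sym^d(V\oo\mc{Q})\big),\qquad H^{>0}\big(\PP,\Sym^d(V\oo\mc{Q})\big)=0.\]
These facts are characteristic-free, coming from Kempf-type vanishing and the rationality of determinantal singularities.

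Next I encode $\mf{m}^{[q]}$. It is generated by the Frobenius images $F^q(V\oo W)$, and I form the Koszul complex on $V\oo W\oo\mc{O}_\PP$ twisted by Frobenius, tensored with $\Sym(V\oo\mc{Q})$:
\[\mc{K}^\bullet:\ \cdots\to \bw^2\big(F^q(V\oo W)\big)\oo\Sym^{d-2q}(V\oo\mc{Q})\to F^q(V\oo W)\oo\Sym^{d-q}(V\oo\mc{Q})\to\Sym^d(V\oo\mc{Q}).\]
Each term is a trivial bundle tensored with $\Sym(V\oo\mc{Q})$, so it has no higher cohomology, and $\bb{H}^0(\mc{K}^\bullet)=(R/\mf{m}^{[q]})_d$. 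The complex is not exact. The map $W\to\mc{Q}$ has kernel $\mc{O}(-1)$, so the Koszul complex on $F^q(V\oo\mc{O}(-1))$ splits off. The homology sheaves are then built from $T_q\Sym^{\bullet}(\mc{Q})$-type pieces, namely truncated symmetric powers of $V\oo\mc{Q}$, tensored with $\bw^i\big(V\oo\mc{O}(-q)\big)$, i.e.\ with $\mc{O}(-iq)$, for $0\leq i\leq m$.

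Via the hypercohomology spectral sequence, it suffices to show $H^i\big(\mc{H}_i(\mc{K}^\bullet)\big)=0$ for each $i$. The factor $T_q\Sym(V\oo\mc{Q})$ is a polynomial functor in $\mc{Q}$ of bounded degree: each of the $m(n-1)$ variables appears with exponent less than $q$, so the degree is at most $m(n-1)(q-1)$, and the part in each $V$-isotypic slice is controlled as well. I will apply Cohomology Vanishing on Projective Space \eqref{eq:vanishing-Polyfun} with $e=iq$ and $j=i$ (or $i-1$, adjusting indices): the degree $d-iq$ of the $\mc{Q}$-part must be at most $(n-1-j)q-1$. Tuning this bookkeeping correctly is the main obstacle. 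Since $V$ has dimension $m$, I expect to decompose by the $\GL(V)$-action, or to use Cauchy filtrations $\bb{S}_\ll V\oo\bb{S}_\ll\mc{Q}$, so that each summand needs only a degree bound on $\bb{S}_\ll\mc{Q}$ relative to the number of $\mc{O}(-q)$ factors. The target is vanishing as soon as $d>m(n-1)q$ up to an $O(1)$ error. Dividing by $q$ and letting $q\to\infty$ then gives $c(R)\leq m(n-1)$, completing the proof.
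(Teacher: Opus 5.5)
Your lower bound (Eagon--Northcott gives $-a(R)=m(n-1)$, then $c(R)\geq -a(R)$) and your setup match the paper. The setup consists of the kernel-line model on $\PP^{n-1}$ with $R_d=H^0(\PP,\Sym^d(V\oo\mc{Q}))$, the Frobenius Koszul complex $\mc{K}^\bullet$, the homology sheaves $\mc{H}^{-i}$ given by $\binom{m}{i}$ copies of $T_q\Sym^{d-iq}(V\oo\mc{Q})\oo\mc{O}_{\PP}(-iq)$, and the reduction to $H^i(\PP,\mc{H}^{-i})=0$. The gap is in the last step, and it is not a bookkeeping problem.

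Cohomology Vanishing on Projective Space cannot be applied directly to $H^i\big(\PP,T_q\Sym^{d-iq}(V\oo\mc{Q})\oo\mc{O}_{\PP}(-iq)\big)$, for two reasons:
\begin{enumerate}
\item With $e=iq$, the hypothesis $e\geq(1+j)q$ forces $j\leq i-1$, so the conclusion never reaches cohomological degree $k=i$.
\item The functor has $\mc{Q}$-degree $d-iq$. For $d>N=m(n-1)(q-1)$ this is of order $m(n-1)q$, far above $(n-1-j)q-1$.
\end{enumerate}
Decomposing by $\GL(V)$ or using Cauchy filtrations does not change this, because every composition factor, viewed as a functor of $\mc{Q}$, still has degree $d-iq$. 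Peeling off powers of $\det\mc{Q}=\mc{O}_{\PP}(1)$ also fails, since it only makes the twist less negative. The vanishing theorem controls low cohomology of bundles of small polynomial degree with a very negative twist. Here you need cohomology in degree $i$ of a bundle of very large polynomial degree with a mild twist, which is the opposite regime.

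The missing idea is Serre duality combined with the self-duality of truncated symmetric powers. Let $\eta=V\oo\mc{Q}$, so $\rk(\eta)=m(n-1)$ and $N=(q-1)\rk(\eta)$. There is a perfect pairing $T_q\Sym^a\eta\times T_q\Sym^{N-a}\eta\to(\det\eta)^{q-1}=\mc{O}_{\PP}((q-1)m)$. Using $\omega_{\PP}=\mc{O}_{\PP}(-n)$, this gives
\[
H^i\big(\PP,T_q\Sym^{d-iq}\eta\oo\mc{O}_{\PP}(-iq)\big)^{\vee}= H^{n-1-i}\big(\PP,T_q\Sym^{N-d+iq}\eta\oo\mc{O}_{\PP}(-e)\big),\qquad e=(q-1)m+n-iq.
\]
Now take $j=n-1-i$. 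The polynomial degree $N-d+iq$ is at most $iq-1=(n-1-j)q-1$ exactly when $d>N$. Also $e-(1+j)q=(m-n)(q-1)\geq 0$, and this is precisely where the hypothesis $m\geq n$ enters. The vanishing theorem then gives $\bb{H}^0(\mc{K}^\bullet)=0$ for $d>N$, so $v_R(q)\leq m(n-1)(q-1)$ and $c(R)\leq m(n-1)$.
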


\begin{proof} We write $V=\kk^n$, $W=\kk^m$, and identify the polynomial ring $S=\kk[x_{ij}]$ with the symmetric algebra $\Sym(V\oo W)$. We set $q=p^s$ for some $s\geq 0$ and we study the cokernel of the map
\begin{equation}\label{eq:Fq-Rd-q-to-Rd}
 F^q(V\oo W) \oo R_{d-q} \lra R_d,
\end{equation}
in order to characterize when it is zero. The idea is to realize the map \eqref{eq:Fq-Rd-q-to-Rd} cohomologically. We write $\PP$ for the projective space parametrizing lines in $V$, and consider the tautological sequence
\begin{equation}\label{eq:taut-ses-PV}
0 \lra \mc{R} \lra V \oo \mc{O}_{\PP} \lra \mc{Q} \lra 0,
\end{equation}
where $\mc{Q}$ is the universal rank $n-1$ quotient bundle, and $\mc{R}=\mc{O}(-1)$ is the universal rank $1$ subbundle. If we tensor the tautological sequence by $W$ then we get
\begin{equation}\label{eq:taut-seq-matrices}
 0 \lra \xi \lra (V\oo W) \oo  \mc{O}_{\PP} \lra \eta \lra 0,
\end{equation}
where $\xi = \mc{R} \oo W \simeq \mc{O}_{\PP}(-1)^{\oplus m}$ and $\eta = \mc{Q} \oo W \simeq \mc{Q}^{\oplus m}$. We have the following natural maps
\[ F^q(V\oo W) \oo  \mc{O}_{\PP} \lra F^q\eta \subset \Sym^q \eta,\quad \Sym^q \eta \oo \Sym^{d-q}\eta \lra \Sym^d \eta.\]
Moreover, we have  that $R_d = H^0(\PP,\Sym^d\eta)$ (see for instance \cite{BCRV}*{Lemma~10.8.4}), and the map \eqref{eq:Fq-Rd-q-to-Rd} is the map induced on global sections by the composition $\psi_{q,d}$ of the natural morphisms
\[
\xymatrix{
 {F^q(V\oo W) \oo \Sym^{d-q}\eta} \ar[r] \ar@/^2.5pc/[rrr]^-{\psi_{q,d}} & F^q\eta \oo \Sym^{d-q}\eta  \ar[r] & \Sym^q\eta \oo \Sym^{d-q}\eta \ar[r] &  \Sym^d \eta  \\
 }
\]
The cokernel of $\psi_{q,d}$ is the truncated power $T_q\Sym^d\eta$, which is zero if 
\begin{equation}\label{eq:d-large} 
 d>(q-1)\cdot \rk(\eta) = (q-1)\cdot m\cdot(n-1).
\end{equation}
To prove the surjectivity of $H^0(\PP,\psi_{q,d})$, we consider the Koszul complex $\mc{K}^{\bullet}$ (we use cohomological notation and write $\mc{K}^{-i}$ instead of $\mc{K}_i$) with components
\[ \mc{K}^{-i} = \bw^i(F^q(V\oo W)) \oo \Sym^{d-iq}\eta\quad\text{ for }i\geq 0.\]
This convention allows us to extend the map $\psi_{q,d}:\mc{K}^{-1}\lra\mc{K}^0$.  The sheaves $\mc{K}^{-i}$ have vanishing higher cohomology (see for instance \cite{BCRV}*{Lemma~10.8.4}), hence the hypercohomology of $\mc{K}^{\bullet}$ is computed from the complex of global sections $H^0\left(\PP,\mc{K}^{\bullet}\right)$. In particular, we obtain that
\begin{equation}\label{eq:coker-psi=HH0} 
\coker H^0(\PP,\psi_{q,d}) = \bb{H}^0(\mc{K}^{\bullet}).
\end{equation}
Let $\mc{H}^{-i}$ be the cohomology of $\mc{K}^{\bullet}$ in degree $-i$, and consider the hypercohomology spectral sequence
\[E_2^{-i,j} = H^j\left(\PP,\mc{H}^{-i}\right) \Longrightarrow \bb{H}^{j-i}\left(\mc{K}^{\bullet}\right).\]
Condition \eqref{eq:d-large} implies that $E_2^{0,0}=0$, and therefore the vanishing $\bb{H}^0(\mc{K}^{\bullet})=0$ reduces to proving
\begin{equation}\label{eq:Hi-Hi=0} 
H^i(\PP,\mc{H}^{-i}) = 0\quad\text{ for }i\geq 1.
\end{equation}
Moreoer, using \eqref{eq:taut-seq-matrices} and the notation \eqref{eq:def-TqSymd} we obtain that for every $i$ we have
\[ \mc{H}^{-i} = F^q\left(\bw^i\xi\right) \oo T_q\Sym^{d-iq}\eta = \bigoplus (T_q\Sym^{d-iq}\eta)\oo\mc{O}_{\PP}(-iq) ,\]
where the right side has $m\choose i$ isomorphic summands, indexed by some choice of basis for $F^q\left(\bw^i W\right)$. 
If $N=(q-1)\cdot\rk(\eta) = (q-1)\cdot m\cdot(n-1)$, then we have a perfect pairing (see \cite{gao-rai}*{Section~2.2})
\[ T_q\Sym^{d-iq}\eta \times T_q\Sym^{N-d+iq}\eta \lra \det(\eta)^{q-1}\simeq\mc{O}_{\PP}((q-1)m).\]
By Serre duality we obtain
\begin{equation}\label{eq:Hi-of-Hi}
\begin{aligned}
 H^i\left(\PP,\mc{H}^{-i}\right)^{\vee} &= \bigoplus H^{n-1-i}\left(\PP,T_q\Sym^{N-d+iq}\eta\oo\mc{O}_{\PP}(iq-(q-1)m-n)\right) \\
 &= H^{n-1-i}\left(\PP,\mc{P}(\mc{Q})\oo \mc{O}_{\PP}(-e)\right),
 \end{aligned}
\end{equation}
where $\mc{P}$ is a polynomial functor of degree $N-d+iq<iq$ (using \eqref{eq:d-large}), and $e=(q-1)m+n-iq$. If we set $j=n-1-i$ then 
\[ e - (1+j) q = (q-1)\cdot m+n-iq - (n-i)\cdot q = (m-n)\cdot(q-1)\geq 0.\]
We can therefore apply \eqref{eq:vanishing-Polyfun} to deduce the vanishing of the cohomology groups in \eqref{eq:Hi-of-Hi}.
We conclude that the map \eqref{eq:Fq-Rd-q-to-Rd} is surjective for $d>N$, and therefore $v_R(q)\leq N$. Dividing by $q$ yields
\[ \frac{v_R(q)}{q} \leq \frac{(q-1)\cdot m\cdot(n-1)}{q}.\]
Taking the limit as $q\lra\infty$ gives the desired bound $c(R)\leq m\cdot(n-1)$ and the equality follows.
\end{proof}

\section{Generic symmetric determinant}
\label{sec:Fthresh-symm-det}

The goal of this section is to prove part (2) of our Main Theorem, computing the diagonal $F$-threshold for the determinant of a generic symmetric matrix. We prove the following.

\begin{theorem}\label{thm:Fthresh-symm-det}
Let $\kk$ denote a field of characteristic $p>0$, and let $S = \kk[x_{ij}]$ denote the polynomial algebra in the entries $(x_{ij})$ of the generic symmetric $n\times n$ matrix. If we let $f=\det(x_{ij})$ denote the symmetric determinant and $R=S/\langle f\rangle$, then the diagonal $F$-threshold of $R$ is given by
\[ c(R) = \frac{n^2-1}{2}.\]
\end{theorem}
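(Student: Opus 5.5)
The proof will give an upper bound and a lower bound for $v_R(q)$ separately.

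\emph{Upper bound.} I will follow the pattern of the proof of Theorem~\ref{thm:Fthresh-detl}, with $\Sym^2$ in place of $V\oo W$. Write $V=\kk^n$, so that $S=\Sym(\Sym^2 V)$. On $\PP=\PP(V)$ the tautological sequence \eqref{eq:taut-ses-PV} gives a surjection $\Sym^2V\oo\mc{O}_{\PP}\twoheadrightarrow \eta:=\Sym^2\mc{Q}$. Its kernel $\xi$ has rank $n$ and is an extension of $\mc{Q}\oo\mc{O}(-1)$ by $\mc{O}(-2)$. By the standard geometric technique, $R_d=H^0(\PP,\Sym^d\eta)$ and the sheaves $\Sym^d\eta$ have no higher cohomology.

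The next step is to form the Koszul complex $\mc{K}^{-i}=\bw^i F^q(\Sym^2 V)\oo\Sym^{d-iq}\eta$. Its homology sheaves should be
\[\mc{H}^{-i}=F^q(\bw^i\xi)\oo T_q\Sym^{d-iq}\eta.\]
As before, the map $F^q(\Sym^2V)\oo R_{d-q}\to R_d$ is surjective once $d>N:=(q-1)\binom{n}{2}$ and $H^i(\PP,\mc{H}^{-i})=0$ for $i\ge1$.

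To get these vanishings I will apply Serre duality together with the perfect pairing on truncated powers with values in $\det(\eta)^{q-1}=\mc{O}((q-1)n)$. Filtering $\bw^i\xi$ by the pieces $\bw^i\mc{Q}(-i)$ and $\bw^{i-1}\mc{Q}(-i-1)$ reduces the problem to
\[H^{n-1-i}\left(\PP,\mc{P}(\mc{Q})\oo\mc{O}(-e)\right)=0,\]
where $\mc{P}$ has degree $<iq+(\text{degree of }F^q\bw^{i}\mc{Q}^\vee\text{ twisted})$. I then need to feed this into \eqref{eq:vanishing-Polyfun}.

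The numerics do not close at $N=(q-1)\binom n2$, and they should not, since the answer exceeds $-a(R)$. I will therefore look for the smallest $d$ for which the hypotheses of \eqref{eq:vanishing-Polyfun} hold for every $i$. The expectation is that this threshold is $d\approx q(n^2-1)/2$, which would give $c(R)\le(n^2-1)/2$. I expect this bookkeeping, in particular handling the nonsplit extension $\xi$ through its Frobenius twist, to be delicate.

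\emph{Lower bound.} I need $v_R(q)\ge q(n^2-1)/2-O(1)$. Equivalently, by \eqref{eq:vRq-from-indeg0f} with $r=\binom{n+1}2$, I need $\op{indeg}(0:_{\ol S}f)\le q(n+1)/2+O(1)$, or directly a nonzero class in $\bb{H}^0(\mc{K}^\bullet)$ for $d$ near this bound.

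The cohomological route is to show that some $H^i(\PP,\mc{H}^{-i})$ is nonzero and survives the spectral sequence. Survival should follow from Theorem~\ref{thm:poly-rep-coh} combined with Lemma~\ref{lem:pres-in-tensor}. Those two results identify the composition factors: those in the target have the shape $L_{\omega_n}\oo(\text{polynomial})$ with a prescribed top $p$-adic digit, so they cannot occur in the neighbouring terms. Hence no differential can kill the class.

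In characteristic $2$ I would instead exhibit explicit low-degree annihilators of $f$ in $\ol S$. This is the step I expect to be the main obstacle: producing a nonvanishing $H^i$ in the right degree and proving, via these representation-theoretic constraints, that it is not hit by any differential.
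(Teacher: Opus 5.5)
Your framework is the paper's: the Koszul complex $\mc{K}^\bullet$, the pairing on truncated powers with Serre duality and \eqref{eq:vanishing-Polyfun} for the upper bound; survival in the spectral sequence via Theorem~\ref{thm:poly-rep-coh} and Lemma~\ref{lem:pres-in-tensor} for $p>2$; an explicit annihilator for $p=2$. However, the one concrete step you propose for the upper bound fails, and the decisive steps of the lower bound are missing.

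\textbf{Upper bound.} Filtering $\bw^i\xi$ by $\bw^{i-1}\mc{Q}(-i-1)\subset\bw^i\xi\twoheadrightarrow\bw^i\mc{Q}(-i)$ loses too much. After Serre duality the subbundle piece becomes $H^{n-1-i}$ of $F^q(\bw^{n-i}\mc{Q})\oo T_q\Sym^{N-d+iq}\eta\oo\mc{O}_{\PP}(-(n-i)q)$. Its polynomial part has degree $(n-i)q+2(N-d+iq)$, and requiring this to be at most $iq-1$ forces $2(d-N)\geq nq+1$. That only gives $c(R)\leq n^2/2$.

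This is not a weakness of the vanishing theorem. For $n=2$ the subbundle piece of $\mc{H}^{-1}$ is $\mc{O}_{\PP^1}(2d-4q)$, which has $H^1\neq 0$ for every $q\leq d\leq 2q-1$. Yet $H^1(\mc{H}^{-1})$ already vanishes for $d\geq (3q-1)/2$, because of the connecting map that your filtration discards.

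The missing observation is that $\xi$ is the image of the injective multiplication map $\mc{R}\oo V\to\Sym^2V\oo\mc{O}_{\PP}$. Hence $\xi\cong V\oo\mc{O}_{\PP}(-1)$ is a sum of line bundles, even though the extension you wrote is nonsplit. So $\mc{H}^{-i}$ is a direct sum of copies of $T_q\Sym^{d-iq}\eta\oo\mc{O}_{\PP}(-iq)$. Then $\mc{P}$ has degree $2(N-d+iq)$ and $e=(n-i)q=(1+j)q$ exactly. The binding case $i=n-1$ of $2(N-d+iq)\leq iq-1$ is $2(d-N)\geq (n-1)q+1$, which yields $v_R(q)\leq q(n^2-1)/2-\binom{n}{2}$.

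\textbf{Lower bound.} For $p>2$ you never produce the nonzero class, and that is the heart of the proof rather than bookkeeping. The paper takes $i=n-1$ and $d=(n^2-1)(q-1)/2$, so that $N-d+(n-1)q=(n-1)(q+1)/2$. It then shows $(\det\mc{Q})^{\oo(q+1)}\hookrightarrow T_q\Sym^{(n-1)(q+1)/2}\eta$: locally $\det(y_{ij})^{(q+1)/2}$ contains $\prod_i y_{ii}^{(q+1)/2}$ with all exponents $<q$, so it survives modulo $\langle y_{ij}^q\rangle$. This needs $(q+1)/2$ to be an integer, which is why $p=2$ is handled separately. Serre duality together with $H^0(Fl_n,\mc{O}_{Fl_n}(q+1,\dots,q+1,q))=(\det V)^{q+1}\oo V^\vee$ then shows that the simple module $L_\mu$, with $\mu=\omega_1+q\omega_{n-1}+(qn-q-n)\omega_n$, occurs in $E_2^{-(n-1),n-1}$.

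Survival is then a check on the \emph{sources} $E_2^{-i,i-1}$, $i\leq n-2$, of the only differentials that can hit this term; those leaving it land in $H^{\geq n}=0$. After dualizing and removing a common determinant twist, $L_\mu^\vee$ becomes $F^qV\oo\bw^{n-1}V\oo(\det V)^{q-1}$. Its top digit $\ll^s$ (with $q=p^s$) is $\omega_1$, of size $1$. Meanwhile $(E_2^{-i,i-1})^\vee$ becomes $F^q(\bw^{n-i}V)\oo W$ with $W$ polynomial by Theorem~\ref{thm:poly-rep-coh}. By Lemma~\ref{lem:pres-in-tensor} its composition factors have $|\ll^s|\geq n-i\geq 2$, so $L_\mu$ cannot be killed.

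For $p=2$ you need an actual annihilator. Modulo $\mf{m}^{[2]}$ one has $f\equiv x_{11}\cdots x_{nn}$: the terms for $\sigma\neq\sigma^{-1}$ cancel in pairs, and non-identity involutions give squares. So $g=f^{q/2-1}x_{11}^{q/2}$ is a nonzero element of $(0:_{\ol{S}}f)$ of degree $n(q/2-1)+q/2$. Then \eqref{eq:vRq-from-indeg0f} gives $v_R(q)\geq q(n^2-1)/2-\binom{n}{2}$.
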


To establish the upper bound $c(R)\leq (n^2-1)/2$ we employ the strategy from the case of general matrices: we first realize the graded components of $R$ cohomologically and then we obtain an upper bound for $v_R(q)$ based on cohomology vanishing results from Section~\ref{sec:coh-vanishing}. This is done in Section~\ref{subsec:upper-bound}. Following the discussion in Section~\ref{subsec:hyper}, we get $-a(R)={n+1\choose 2}-n = {n\choose 2}$, showing that the lower bound \eqref{eq:cR>aR-hypersurface} is no longer optimal. In Section~\ref{subsec:char2-symdet} we find an optimal lower bound for $v_R(q)$ in characteristic $2$ using duality theory. In characteristic $p>2$ we instead obtain the lower bound based on a representation theoretic argument and a careful analysis of cohomology in Section~\ref{subsec:lowbd-symdet-coh}.

\subsection{The upper bound}\label{subsec:upper-bound} We set $V=\kk^n$ and note that \eqref{eq:taut-ses-PV} yields a short exact sequence
\begin{equation}\label{eq:taut-seq-symm}
 0 \lra \xi \lra \Sym^2 V \oo  \mc{O}_{\PP} \lra \eta \lra 0,
\end{equation}
where $\xi = V\oo \mc{O}_{\PP}(-1)$ and $\eta = \Sym^2\mc{Q}$. Following \cite{weyman}*{Section~6.3}, we can describe the graded components of $R$ as $R_d=H^0(\PP,\Sym^d\eta)$. Hence, finding an upper bound $d>v_R(q)$ (for $q=p^s$) amounts to prove that the following natural multiplication map is surjective
\begin{equation}\label{eq:Fq-Rd-q-to-Rd-symm}
F^q(\Sym^2 V) \oo R_{d-q} \lra R_d.
\end{equation}
As for generic matrices, we realize \eqref{eq:Fq-Rd-q-to-Rd-symm} as the map induced on global sections by the composition
\[
\xymatrix{
 {F^q(\Sym^2 V) \oo \Sym^{d-q}\eta} \ar[r] \ar@/^2.5pc/[rrr]^-{\psi_{q,d}} & F^q\eta \oo \Sym^{d-q}\eta  \ar[r] & \Sym^q\eta \oo \Sym^{d-q}\eta \ar[r] &  \Sym^d \eta  \\
 }
\]
We note that
\[ \rank(\eta) = {n\choose 2}\quad\text{ and }\quad \det(\eta) = \mc{O}_{\PP}(n).\]
In particular we have  $T_q\Sym^d\eta = \coker(\psi_{q,d})= 0$ for $d>N = (q-1){n\choose 2}$. We consider the Koszul complex $\mc{K}^{\bullet}$ induced by the morphism $F^q(\Sym^2 V)\oo\mc{O}_{\PP}\lra \Sym^q\eta$, with components given by
\begin{equation}\label{eq:terms-kosz-sym} 
\mc{K}^{-i} = \bw^i(F^q(\Sym^2 V)) \oo \Sym^{d-iq}\eta\quad\text{ for }i\geq 0,
\end{equation}
and $\psi_{q,d}$ is the first differential $\mc{K}^{-1}\lra\mc{K}^0$. It follows from \cite{boffi-plethysm} that each $\Sym^m\eta$ has a filtration with composition factors $\bb{S}_{2\ll}\mc{Q}$ where $\ll$ is a partition of $m$, and each such factor $\bb{S}_{2\ll}\mc{Q}$ has vanishing higher cohomology (\cite{BCRV}*{Corollary~9.8.6}). It follows that each $\mc{K}^{-i}$ has vanishing higher cohomology, hence the analogue of \eqref{eq:coker-psi=HH0} holds. The homology sheaves of $\mc{K}^{\bullet}$ are computed~by
\[ \mc{H}^{-i} = F^q\left(\bw^i\xi\right) \oo T_q\Sym^{d-iq}\eta \simeq \bigoplus (T_q\Sym^{d-iq}\eta)\oo\mc{O}_{\PP}(-iq) \quad\text{ for all }i,\]
where the summands are indexed by a basis of $F^q\left(\bw^i V\right)$. Using Serre duality as in \eqref{eq:Hi-of-Hi}, we~obtain
\begin{equation}\label{eq:Hi-of-Hi-symm}
H^i\left(\PP,\mc{H}^{-i}\right)^{\vee} \simeq \bigoplus H^{n-1-i}\left(\PP,T_q\Sym^{N-d+iq}\eta\oo\mc{O}_{\PP}(iq-nq)\right)
= H^{n-1-i}\left(\PP,\mc{P}(\mc{Q})\oo \mc{O}_{\PP}(-e)\right),
\end{equation}
where $\mc{P}$ is a polynomial functor of degree $2(N-d+iq)$ and $e=(n-i)q$. If we take
\[ d = N + \frac{q(n-1)}{2}+1=\frac{q(n^2-1)}{2}-{n\choose 2} + 1,\]
then it follows that
\[ 2(N-d+iq) = (2i-n+1)q - 2 < iq\text{ for all }i\leq n-1.\]
If $j=n-1-i$, then $e=(1+j)q$, and we can apply \eqref{eq:vanishing-Polyfun} to deduce the vanishing of the cohomology groups in \eqref{eq:Hi-of-Hi-symm}. It follows that $\coker H^0(\PP,\psi_{q,d}) = \bb{H}^0(\mc{K}^{\bullet})=0$, hence \eqref{eq:Fq-Rd-q-to-Rd-symm} is surjective and
\begin{equation}\label{eq:upbd-vRq-symm}
v_R(q) \leq d-1 = q\cdot\frac{n^2-1}{2}-{n\choose 2}.
\end{equation}
Dividing by $q$ and taking the limit as $q\lra\infty$ we conclude that $c(R)\leq \frac{n^2-1}{2}$, as desired.

\subsection{The lower bound in characteristic $p=2$}\label{subsec:char2-symdet} 

We let $\mf{m}=\langle x_{ij}\rangle$ denote the maximal homogeneous ideal, and prove first that the symmetric determinant $f=\det(x_{ij})$ satisfies
\begin{equation}\label{eq:fx11-in-m2} 
f \equiv x_{11}\cdots x_{nn}\ (\text{mod }\mf{m}^{[2]})
\end{equation}
Indeed, we note that the symmetry $x_{ij}=x_{ji}$ implies that for every permutation $\sigma\in\mf{S}_n$ we have
\[ x_{1\sigma(1)}\cdots x_{i\sigma(i)}\cdots x_{n\sigma(n)} = x_{\sigma(1)1}\cdots x_{\sigma(i)i}\cdots x_{\sigma(n)n} = x_{1\sigma^{-1}(1)}\cdots x_{i\sigma^{-1}(i)}\cdots x_{n\sigma^{-1}(n)}. \]
Since $\op{char}(\kk)=2$, it follows that in the determinant formula
\[ f = \sum_{\sigma\in\mf{S}_n} x_{1\sigma(1)}\cdots x_{i\sigma(i)}\cdots x_{n\sigma(n)}\]
the terms corresponding to $\sigma$ and $\sigma^{-1}$ cancel out if $\sigma\neq\sigma^{-1}$. The remaining terms correspond to involutions in $\mf{S}_n$. If $\sigma$ is the identity, the corresponding term is $x_{11}\cdots x_{nn}$. Otherwise, there are $i\neq j$ such that $\sigma(i)=j$ and $\sigma(j)=i$, and the corresponding term is divisible by $x_{ij}^2$, proving \eqref{eq:fx11-in-m2}.

It follows from \eqref{eq:fx11-in-m2} that $f\cdot x_{11}\in \mf{m}^{[2]}$. This implies that for every $q=2^s$, $s\geq 1$, we have
\[ g= f^{q/2-1}\cdot x_{11}^{q/2} \in (\mf{m}^{[q]}:f).\]
Moreover, expanding $g$ in the monomial basis we obtain that $g\not\in \mf{m}^{[q]}$, as its support contains
\[  x_{11}^{q-1}x_{22}^{q/2-1} \cdots \ x_{nn}^{q/2-1} \not\in\mf{m}^{[q]}.\]
Using the notation in Section~\ref{subsec:hyper}, we consider $g$ as a non-zero element in $(0:_{\ol{S}} f)$, hence \eqref{eq:vRq-from-indeg0f} yields
\begin{equation}\label{eq:lowbd-vRq-symm-char2}
v_R(q) \geq (q-1){n+1\choose 2} - \deg(g) = (q-1){n+1\choose 2} - n(q/2-1) - q/2 = q\cdot\frac{n^2-1}{2} - {n\choose 2}.
\end{equation}
Dividing by $q$ and taking the limit as $q\lra\infty$ we obtain the desired lower bound $c(R)\geq \frac{n^2-1}{2}$.

\begin{remark}\label{rem:vRq-sharp-sym}
    Combining \eqref{eq:lowbd-vRq-symm-char2} with \eqref{eq:upbd-vRq-symm} shows that $v_R(q)=q\cdot\frac{n^2-1}{2}-{n\choose 2}$ in characteristic $p=2$.
\end{remark}

\subsection{The lower bound via cohomology and representation theory}\label{subsec:lowbd-symdet-coh}

We now assume $\op{char}(\kk)=p>2$, we let $q=p^s$ and set $d=(n^2-1)\cdot(q-1)/2$. Our goal is to prove $v_R(q)\geq d$. To that end, we form the Koszul complex \eqref{eq:terms-kosz-sym}, and consider the associated hypercohomology spectral sequence
\[E_2^{-i,j} = H^j\left(\PP,\mc{H}^{-i}\right) \Longrightarrow \bb{H}^{j-i}\left(\mc{K}^{\bullet}\right),\]
where the differential on page $r\geq 2$ is denoted $d_r^{-i,j} : E_r^{-i,j} \lra E_r^{-i-r+1,j+r}$. Using the analogue of \eqref{eq:coker-psi=HH0} in the case of symmetric matrices as in Section~\ref{subsec:upper-bound}, it follows that in order to prove that $v_R(q)\geq d$ we need to verify $\bb{H}^{0}\left(\mc{K}^{\bullet}\right)\neq 0$. This is achieved in the following steps:
\begin{enumerate}
    \item We identify an irreducible representation $L$ occurring in $E_2^{-(n-1),n-1}$.
    \item We show that $L$ does not occur in $E_2^{-i,i-1}$ for any $i\leq n-2$.
    \item The only potentially non-trivial differentials involving $E_r^{-(n-1),n-1}$ are maps with source $E_r^{-i,(i-1)}$ (when $r=n-i$), showing that $L$ occurs in $E_r^{-(n-1),n-1}$ for all $r$. In particular $E_{\infty}^{-(n-1),n-1}\neq 0$, and since it is a subquotient of $\bb{H}^{0}\left(\mc{K}^{\bullet}\right)$, we conclude that $\bb{H}^{0}\left(\mc{K}^{\bullet}\right)\neq 0$.
\end{enumerate}

\begin{lemma}\label{lem:detQ-in-TqSymeta}
 If $q=p^s>1$, $N=(q-1)\rk(\eta)$, and $d=(n^2-1)\cdot(q-1)/2$, then there exists a natural inclusion 
 \[(\det\mc{Q})^{\oo(q+1)}\hookrightarrow T_q\Sym^{N-d+(n-1)q}\eta.\]
\end{lemma}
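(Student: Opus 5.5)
The plan is to write down an explicit equivariant section and then check that it survives in the truncated symmetric power. Throughout, $p>2$ as in this subsection, so $(q+1)/2$ is an integer. Set $r=n-1=\rank(\mc{Q})$.

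First, the degrees. Since $\rk(\eta)={n\choose 2}$, we have $N-d=(q-1)(1-n)/2$. Hence
\[ m := N-d+(n-1)q = \frac{(n-1)(q+1)}{2}. \]
A local section of $\Sym^m\eta=\Sym^m(\Sym^2\mc{Q})$ has degree $2m=(n-1)(q+1)$ in $\mc{Q}$. This matches the degree of $(\det\mc{Q})^{\oo(q+1)}$, so the statement is at least numerically consistent.

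Next, the construction of the map. I would first build a natural morphism
\[ \delta : (\det\mc{Q})^{\oo 2} \lra \Sym^{r}(\Sym^2\mc{Q}). \]
In a local frame $e_1,\dots,e_r$ of $\mc{Q}$, it sends $(e_1\wedge\cdots\wedge e_r)^{\oo 2}$ to the determinant $\sum_{\sigma}\op{sgn}(\sigma)\prod_i e_ie_{\sigma(i)}$ of the symmetric matrix $(e_ie_j)$ with entries in $\Sym^2\mc{Q}$. Intrinsically, this is the restriction of the determinant of a generic endomorphism, viewed as a map $\det\mc{Q}\oo\det\mc{Q}\to\Sym^r(\mc{Q}\oo\mc{Q})$, followed by $\mc{Q}\oo\mc{Q}\to\Sym^2\mc{Q}$. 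Because it is defined functorially in $\mc{Q}$, it is $\GL$-equivariant, and so it is independent of the frame and glues to a morphism of sheaves.

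I then raise $\delta$ to the power $(q+1)/2$ using multiplication in $\Sym(\eta)$. This gives a map $(\det\mc{Q})^{\oo(q+1)}\to\Sym^m\eta$, which I compose with the projection $\Sym^m\eta\to T_q\Sym^m\eta$.

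It remains to show that this composite is an inclusion. Its source is a line bundle and its target is locally free; the target is a direct summand of a free module in each fiber by the local description below. So it suffices to check that the composite is nonzero on every fiber. By equivariance, it is enough to check a single fiber; alternatively, the local computation below is the same at every point. Over a point, $\Sym(\Sym^2\kk^r)$ is the polynomial ring $A=\kk[y_{ij}]_{i\le j}$, with $y_{ij}=e_ie_j$. The image of $F^q\oo\Sym^{m-q}$ is the degree $m$ part of the Frobenius power $\langle y_{ij}^q\rangle$, since $q$-th powers are additive. Thus $T_q\Sym^m$ is the degree $m$ component of $A/\langle y_{ij}^q\rangle$, and the image of the generator is $\det(y)^{(q+1)/2}$.

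The main point, which is the only step needing care, is to show $\det(y)^{(q+1)/2}\notin\langle y_{ij}^q\rangle$. Since $\langle y_{ij}^q\rangle$ is a monomial ideal, it suffices to exhibit a monomial with all exponents less than $q$ that appears in $\det(y)^{(q+1)/2}$ with nonzero coefficient. I would take $\prod_i y_{ii}^{(q+1)/2}$. Each factor $\det(y)$ contributes a term $\pm\prod_i y_{i\sigma(i)}$. Any $\sigma\neq\mathrm{id}$ introduces an off-diagonal variable, which cannot be cancelled in a product of monomials. Hence this monomial arises only when every factor uses the identity permutation, and its coefficient is $1$. Its exponents satisfy $(q+1)/2<q$ because $q>1$. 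This proves nonvanishing on fibers, and hence the natural inclusion asserted in the lemma.
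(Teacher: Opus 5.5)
Your proposal is correct and follows essentially the same route as the paper. Both construct the natural map $(\det\mc{Q})^{\oo 2}\to\Sym^{n-1}\eta$, locally given by $\det(y_{ij})$, raise it to the power $(q+1)/2$, project to $T_q\Sym^{(n-1)(q+1)/2}\eta$, and detect nonvanishing through the monomial $\prod_i y_{ii}^{(q+1)/2}\notin\langle y_{ij}^q\rangle$; the differences are that you build the map directly where the paper cites Boffi's plethysm result, and you spell out two steps the paper leaves implicit (the coefficient is exactly $1$, and fiberwise nonvanishing suffices for injectivity).
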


\begin{proof} It follows from \cite{boffi-plethysm} that there exists a natural inclusion $\iota:(\det\mc{Q})^{\oo 2}=\bb{S}_{(2^{n-1})}\mc{Q}\hookrightarrow \Sym^{n-1}\eta$: in a local chart where $\mc{Q}$ is free of rank $(n-1)$ we can choose a corresponding local basis $\{y_{ij}:1\leq i\leq j\leq n-1\}$ for $\eta=\Sym^2\mc{Q}$, and then the image of $\iota$ is locally spanned by $\det(y_{ij})$. Raising the determinant to the power $(q+1)/2$ and considering the natural projection to the truncated powers we get a map
\begin{equation}\label{eq:det-embed-trunc}
(\det\mc{Q})^{\oo(q+1)}\lra \Sym^{(n-1)(q+1)/2}\eta \lra T_q\Sym^{(n-1)(q+1)/2}\eta.
\end{equation}
Since $(q+1)/2<q$, we have
\[\det(y_{ij})^{(q+1)/2} = y_{11}^{(q+1)/2}\cdots \ y_{n-1,n-1}^{(q+1)/2}  + \cdots \not\in\langle y_{ij}^q\rangle,\]
hence \eqref{eq:det-embed-trunc} is an inclusion. The conclusion follows from having $N-d+(n-1)q=(n-1)(q+1)/2$.
\end{proof}

To prove step (1), we first consider the natural identifications
\[
\begin{aligned}
E_2^{-(n-1),n-1} &= F^q\left(\bw^{n-1}V\right)\oo H^{n-1}\left(\PP,(T_q\Sym^{d-(n-1)q}\eta)\oo\mc{O}_{\PP}(-(n-1)q)\right) \\
&= F^q\left(\bw^{n-1}V\right)\oo H^0\left(\PP,(T_q\Sym^{N-d+(n-1)q}\eta)\oo\mc{O}_{\PP}(-q)\right)^{\vee} \oo (\det V)^{(q-1)n+1}.
\end{aligned}
\]
Lemma~\ref{lem:detQ-in-TqSymeta} implies that $H^0\left(\PP,(T_q\Sym^{N-d+(n-1)q}\eta)\oo\mc{O}_{\PP}(-q)\right)$ contains as a subrepresentation
\[H^0\left(\PP,(\det\mc{Q})^{\oo(q+1)}\oo\mc{O}_{\PP}(-q)\right)=H^0\left(Fl_n,\mc{O}_{Fl_n}(q+1,\dots,q+1,q)\right)=(\det V)^{q+1}\oo V^{\vee}\]
which follows from \cite{BCRV}*{Theorem~9.8.5}. This implies that $E_2^{-(n-1),n-1}$ contains
\begin{equation}\label{eq:def-L}
L = F^q\left(\bw^{n-1}V\right)\oo V \oo (\det V)^{qn-q-n}=L_{\mu},\text{ where }\mu = \omega_1+q\omega_{n-1}+(qn-q-n)\omega_n.
\end{equation}

We next verify step (2), which is equivalent to proving that for $i\leq n-2$, the representation
\[L^{\vee} = F^qV \oo \bw^{n-1}V \oo (\det V)^{-qn+n-1}\]
does not occur inside
\[
\begin{aligned}
\left(E_2^{-i,i-1}\right)^{\vee} &= F^q\left(\bw^iV\right)^{\vee}\oo H^i\left(\PP,(T_q\Sym^{d-iq}\eta)\oo\mc{O}_{\PP}(-iq)\right)^{\vee} \\
&=F^q\left(\bw^{n-i}V\right)\oo H^{n-1-i}\left(\PP,(T_q\Sym^{N-d+iq}\eta)\oo\mc{O}_{\PP}(-(n-i)q)\right)\oo (\det V)^{-qn+n-1-q}
\end{aligned}
\]
Since $(n-i)q\geq 1$, and $\det(V)=L_{\omega_n}$, it follows from Theorem~\ref{thm:poly-rep-coh} that
\[H^{n-1-i}\left(\PP,(T_q\Sym^{N-d+iq}\eta)\oo\mc{O}_{\PP}(-(n-i)q)\right) = W \oo \det(V)\]
for some polynomial representation $W$. Hence, in order to conclude step (2), we have to verify that
\[ F^qV \oo \bw^{n-1}V \oo (\det V)^{q-1} = F^q V \oo L_{\omega_{n-1}+(q-1)\omega_n}=L_{q\omega_1}\oo L_{\omega_{n-1}+(q-1)\omega_n}\]
does not occur in any tensor product $F^q\left(\bw^{n-i}V\right)\oo W=L_{q\omega_{n-i}}\oo W$ where $i\leq n-2$ and $W$ is a polynomial representation. Since $|\omega_{n-i}|=n-i>1=|\omega_1|$ and $\omega_{n-1}+(q-1)\omega_n$ is $q$-restricted, this follows from Lemma~\ref{lem:pres-in-tensor}.

Since step (3) follows from (1), (2), we conclude that $v_R(q)\geq d$. We divide by $q$ and take the limit as $q\lra\infty$ to obtain $c(R)\geq(n^2-1)/2$, which concludes the proof of Theorem~\ref{thm:Fthresh-symm-det}.

\section{Generic Pfaffian}
\label{sec:Fthresh-Pfaff}

The goal of this section is to give a quick derivation for the diagonal $F$-threshold for the Pfaffian of the generic skew-symmetric matrix of even size. We prove the following.

\begin{theorem}\label{thm:Fthresh-Pfaff}
Let $\kk$ be a field of characteristic $p>0$, and let $S = \kk[x_{ij}]$ be the polynomial algebra in the entries $(x_{ij})$ of a generic skew-symmetric $n\times n$ matrix, where $n=2m$ is even. Let $f=\op{Pf}(x_{ij})$ be the Pfaffian of $(x_{ij})$ and $R=S/\langle f\rangle$, then the diagonal $F$-threshold of $R$ is given by
\[ c(R) = \frac{n^2-2n}{2}.\]
\end{theorem}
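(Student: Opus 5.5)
The lower bound comes for free from \eqref{eq:cR>aR-hypersurface}. Here $S$ has $r=\binom{n}{2}$ variables and $f=\op{Pf}(x_{ij})$ has degree $k=m=n/2$, so
\[ c(R)\geq \binom{n}{2}-\frac{n}{2} = \frac{n^2-2n}{2}.\]
The work is the upper bound $c(R)\leq (n^2-2n)/2$. As announced in the introduction, I would obtain it by a degeneration to the generic $m\times m$ determinant and then apply Theorem~\ref{thm:Fthresh-detl} with $m=n$, namely $c=m(m-1)$.

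\textbf{Setting up the degeneration.} Order the variables so that the skew-symmetric matrix has block form $\begin{pmatrix} A & Y\\ -Y^T & B\end{pmatrix}$, where $A,B$ are skew $m\times m$ and $Y$ is a generic $m\times m$ matrix. Specializing $A=B=0$ gives $\op{Pf}=\pm\det(Y)$. Equivalently, choose a weight/term order that makes the leading form of $f$ equal to $\pm\det(Y)$: give the variables of $A,B$ weight $1$ and those of $Y$ weight $0$, so the lowest-weight part of $f$ is $\pm\det Y$.

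\textbf{Comparing $F$-thresholds.} I would use the bound $v_R(q)\leq v_{R'}(q)+(\text{contribution of the killed variables})$, in the following form. Let $z_1,\dots,z_t$ be the $t=2\binom{m}{2}=m^2-m$ entries of $A,B$, and set $R' = R/\langle z_1,\dots,z_t\rangle \cong \kk[Y]/\langle\det Y\rangle$. Then $\mf{m}_R^{D}\subseteq \mf{m}^{[q]}$ holds as soon as $D > v_{R'}(q)+t(q-1)$. To see this, take a monomial of degree $D$ not in $\mf m^{[q]}$; at most $t(q-1)$ of its degree comes from the $z$'s, so its $Y$-part has degree $>v_{R'}(q)$. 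That $Y$-part therefore lies in $\mf{m}^{[q]}$ modulo $\langle z\rangle$. Here $\det Y = \pm f + (\text{terms in }\langle z\rangle)$, so reducing modulo $f$ produces new terms involving $z$, and the argument must be iterated.

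The honest difficulty is exactly this: the correction terms increase the $z$-degree, so the naive bound needs care. I would handle it by an induction on $z$-degree (each rewriting strictly raises the $z$-degree, which is bounded by $t(q-1)$ before landing in $\mf m^{[q]}$). Alternatively, and more cleanly, I would use the general semicontinuity of $v(q)$ under Gröbner/weight degeneration: with the weight above, $\op{in}(\langle f\rangle+\mf m^{[q]})\supseteq\langle \det Y\rangle+\mf m^{[q]}$, and Hilbert functions agree. This gives $v_R(q)\leq v_{S/\langle \det Y\rangle}(q)$, where now $S/\langle\det Y\rangle=R'[z_1,\dots,z_t]$. Finally, $v$ is additive under tensor products with polynomial rings: $v_{R'[z]}(q)=v_{R'}(q)+t(q-1)$.

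\textbf{Conclusion.} Dividing by $q$ and using Theorem~\ref{thm:Fthresh-detl} gives
\[c(R)\leq m(m-1)+m^2-m = 2m^2-2m=\frac{n^2-2n}{2}.\]
The main obstacle is justifying the degeneration inequality $v_R(q)\leq v_{\op{in}}(q)$. The key point is that $\mf m^{[q]}$ is a monomial ideal, hence fixed by the degeneration, so $\op{in}_w(\langle f\rangle+\mf m^{[q]})\supseteq \langle \op{in}_w f\rangle + \mf m^{[q]}$. Since both quotients have equal Hilbert functions, the socle degree of $S/(\langle f\rangle+\mf m^{[q]})$ is at most that of $S/(\langle\det Y\rangle+\mf m^{[q]})$.
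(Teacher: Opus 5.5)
Your proposal is correct and follows essentially the paper's proof. The lower bound comes from $c(R)\geq -a(R)$. For the upper bound you degenerate $\op{Pf}$ to $\pm\det Y$; the paper's family $f_t$, which scales the diagonal blocks by $t$, is exactly your weight degeneration. You then split off the $m(m-1)$ free variables by additivity of socle degree and invoke Theorem~\ref{thm:Fthresh-detl}.

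The one difference is in justifying $v_R(q)\leq v_{\mathstrut^{0}R}(q)$. The paper uses lower semicontinuity of the rank of multiplication by $f_t$ on $(S/\mf{m}^{[q]})_{d-m}\to(S/\mf{m}^{[q]})_{d}$. You instead use equality of Hilbert functions under the flat weight degeneration together with $\op{in}_w(\langle f\rangle+\mf{m}^{[q]})\supseteq\langle\det Y\rangle+\mf{m}^{[q]}$. Your elementary descending induction on $z$-degree also works as a third way to get this inequality.
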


\begin{proof}
To bound the diagonal $F$-threshold from below we use the general estimate $c(R)\geq -a(R)$: following the notation in Section~\ref{subsec:hyper}, we have $r=\dim(S)={n\choose 2}$ and $k=\deg(f)=n/2$, hence 
\[c(R) \overset{\eqref{eq:cR>aR-hypersurface}}{\geq} {n\choose 2}-\frac{n}{2} = \frac{n^2-2n}{2}.\]
To conclude, we then need to establish the upper bound
\begin{equation}\label{eq:upperbd-cR-pfaffian} 
c(R) \leq \frac{n^2-2n}{2} = 2(m^2-m).
\end{equation}
We do so by degenerating the hypersurface $f=\op{Pf}(x_{ij})$ to the determinant of a generic $m\times m$ matrix, and using the results from Section~\ref{sec:Fthresh-detl}. To that end, we define for $t\in\kk$
\[ x_{ij}(t) = \begin{cases}
    tx_{ij} & \text{if }1\leq i,j\leq m\text{ or }m+1\leq i,j\leq n, \\
    x_{ij} & \text{otherwise.}
\end{cases}\]
We let $f_t = \op{Pf}(x_{ij}(t))$ and $\mathstrut^{t}R = S/\langle f_t\rangle$. For $t\neq 0$ multiplication by $t$ is an invertible transformation, hence $\mathstrut^{t}R\simeq R$ as graded $\kk$-algebras,  $c(\mathstrut^{t}R)=c(R)$ and $v_q(\mathstrut^{t}R) = v_q(R)$ for $t\neq 0$ and $q=p^s$. 
Notice that for a given $d$, the condition $v_q(\mathstrut^{t}R)<d$ is equivalent to the surjectivity of the linear map
\[\left(S/\mf{m}^{[q]}\right)_{d-m} \overset{\cdot f_t}{\lra} \left(S/\mf{m}^{[q]}\right)_{d}\]
and, by semi-continuity, the rank of this transformation can only drop at $t=0$. This implies
\[ v_q(R) \leq v_q(\mathstrut^{0}R),\quad\text{and therefore}\quad c(R) \leq c(\mathstrut^{0}R).\]
Now it is enough to verify the bound \eqref{eq:upperbd-cR-pfaffian} holds for $c(\mathstrut^{0}R)$. The matrix $(x_{ij}(0))$ has a block structure
\[(x_{ij}(0)) = \begin{pmatrix}
0 & X \\
- X^T & 0
\end{pmatrix}\quad\text{where}\quad X = \begin{pmatrix}
x_{1,m+1} & x_{1,m+2} & \cdots & x_{1,n} \\
x_{2,m+1} & x_{2,m+2} & \cdots & x_{2,n} \\
\vdots & \vdots & \ddots & \vdots \\
x_{m,m+1} & x_{m,m+2} & \cdots & x_{m,n}
\end{pmatrix}.\]
We have that $X$ is a generic $m\times m$ matrix of variables, and that
\[ f_0 = \op{Pf}(x_{ij}(0)) = \det(X).\]
We can then partition the variables in $S$ accordingly to whether they appear in $x_{ij}(0)$ and get
\[ S = S_1 \oo_{\kk} S_2,\]
where $S_1 = \kk[x_{ij}:1\leq i,j\leq m\text{ or }m+1\leq i,j\leq 2m], \quad S_2 = \kk[x_{ij}:1\leq i\leq m\text{ and }m+1\leq j\leq n]$.
Since $f_0=\det(X)\in S_2$, it follows that 
\[ \mathstrut^{0}R = S_1 \oo_{\kk} R_2,\text{ where }R_2 = S_2 / \det(X).\]
The socle degree is additive with respect to tensor products over the base field $\kk$, hence it follows 
\[ v_q(\mathstrut^{0}R) = v_q(S_1) + v_q(R_2)\text{ and therefore }c(\mathstrut^{0}R)=c(S_1) + c(R_2).\]
Since $S_1$ is a polynomial ring of dimension $m(m-1)$, we get $c(S_1)=m(m-1)$, and since $R_2$ is a generic determinantal ring, it follows from Theorem~\ref{thm:Fthresh-detl} that $c(R_2)=m(m-1)$. We obtain that
\[c(R) \leq c(\mathstrut^{0}R) = c(S_1) + c(R_2) = 2(m^2-m),\]
which is the desired inequality \eqref{eq:upperbd-cR-pfaffian}, concluding the proof of Theorem~\ref{thm:Fthresh-Pfaff}.
\end{proof}

\section*{Acknowledgements}
We thank Francesco Russo and the organizers of the P.R.A.G.MAT.I.C. research school held in Catania in June 2023 for giving us the opportunity to meet. The authors also thank INDAM for their support. We thank
Luis Nuñez-Betancourt for suggesting the problem. 
Experiments with Macaulay2 \cite{GS} have provided many valuable insights. Raicu acknowledges the support of the National Science Foundation Grant DMS-2302341. Singh  acknowledges support of National Board for Higher Mathematics, DAE, Govt. of India (Grant no. 02011/24/2024/NBHM(R.P)/R\&D II/9826).

\end{document}